\theoremstyle{plain}
\newtheorem{thm}{Theorem}[section]
\newtheorem{cor}[thm]{Corollary}
\newtheorem{lem}[thm]{Lemma}
\newtheorem{prop}[thm]{Proposition}
\theoremstyle{definition}
\theoremstyle{remark}
\newtheorem{rem}[thm]{Remark}
\numberwithin{equation}{section}
\newcommand{\average}{{\mathchoice {\kern1ex\vcenter{\hrule height.4pt
width 6pt depth0pt} \kern-9.7pt} {\kern1ex\vcenter{\hrule
height.4pt width 4.3pt depth0pt} \kern-7pt} {} {} }}
\def\R{\mathbb{R}}
\newcommand{\divv}{{\rm div}}
\newcommand{\logg}{{\rm log}\, }
\newcommand{\BB}{\mathcal {B}_r}
\newcommand{\BBone}{\mathcal {B}_1}
\newcommand{\BBB}{\mathcal {B}^{+}_r}
\newcommand{\B}{B_r}
\begin{document}

\title[The free boundary in the fractional obstacle problem]{Global regularity for the free boundary in the \\ obstacle problem for the fractional Laplacian}

\author[B. Barrios]{Bego\~na Barrios}
\address{ The University of Texas at Austin, Department of Mathematics, 2515 Speedway, Austin, TX 78751, USA.
Present address: University of La Laguna, Department of Analysis in Mathematics. C/. Astrof\'{\i}sico Francisco S\'{a}nchez s/n, 38200. Tenerife. Spain.}
\email{bbarrios@ull.es}

\author[A. Figalli]{Alessio Figalli}
\address{ETH Z\"urich, Department of Mathematics, R\"amistrasse 101, 8092 Z\"urich, Switzerland.}
\email{alessio.figalli@math.ethz.ch}

\author[X. Ros-Oton]{Xavier Ros-Oton}
\address{The University of Texas at Austin, Department of Mathematics, 2515 Speedway, Austin, TX 78751, USA}
\email{ros.oton@math.utexas.edu}

\thanks{BB  was partially supported by a postdoctoral fellowship
given by Fundaci\'on Ram\'on Areces (Spain) and MTM2013-40846-P, MINECO.
AF was supported by 
NSF Grants DMS-1262411 and DMS-1361122, and by
the ERC Grant ``Regularity and Stability in Partial Differential Equations (RSPDE)''. 
XR was supported by NSF grant DMS-1565186 and MINECO grant MTM2014-52402-C3-1-P (Spain)}

\keywords{Obstacle problem; fractional Laplacian; free boundary.}
\subjclass[2010]{35R35; 47G20; 60G40.}

\begin{abstract}
We study the regularity of the free boundary in the obstacle problem for the fractional Laplacian under the assumption that the obstacle $\varphi$
satisfies $\Delta \varphi\leq 0$ near the contact region.
Our main result establishes that the free boundary consists of a set of regular points, which is known to be a $(n-1)$-dimensional $C^{1,\alpha}$ manifold by the results in \cite{CSS}, and a set of singular points, which we prove to be contained in a union of $k$-dimensional $C^1$-submanifold, $k=0,\ldots,n-1$.

Such a complete result on the structure of the free boundary was known only in the case of the classical Laplacian  \cite{C-obst1,C-obst2},
and it is new even for the Signorini problem (which corresponds to the particular case of the $\frac12$-fractional Laplacian).
A key ingredient behind our results is the validity of a new non-degeneracy condition $\sup_{B_r(x_0)}(u-\varphi)\geq c\,r^2$, valid at all free boundary points $x_0$.
\end{abstract}

\maketitle

\section{Introduction and main results}

\subsection{The obstacle problem for the fractional Laplacian}

Given a smooth function $\varphi :\R^n\to \R$,
the obstacle problem for the fractional Laplacian can be written as
\begin{equation}\label{pb}
\left\{
\begin{array}{l}
\min\bigl\{ u-\varphi,\,(-\Delta)^su\bigr\}=0\quad \textrm{in}\ \R^n,\\
\lim_{|x|\to\infty}u(x)=0,
\end{array}
\right.
\end{equation}
where
\[\qquad\qquad(-\Delta)^s u(x)=c_{n,s}\,\textrm{PV}\int_{\R^n}\bigl(u(x)-u(x+z)\bigr)\frac{dz}{|z|^{n+2s}},\qquad s\in(0,1),\]
is the fractional Laplacian. 

This kind of obstacle problems naturally appear when studying the optimal stopping problem for a stochastic process, and in particular they are used in the pricing of American options.
Indeed, the operator $(-\Delta)^s$ corresponds to the case where the underlying stochastic process is a stable radially symmetric L\'evy process.
We provide in the Appendix a brief informal description of the optimal stopping problem and its relation to \eqref{pb}.

In addition to this application, the obstacle problem for the fractional Laplacian appears in many other contexts,
for instance when studying the regularity of minimizers of some nonlocal interaction energies in  kinetic equations (see \cite{CDM}).

\subsection{Its local version and the Signorini problem}
Although the fractional Laplacian is a non-local operator, it is possible to localize the above problem via the so-called ``extension method''
(see \cite{MO,CSext})\footnote{The extension problem for the fractional Laplacian was first discovered by Molchanov and Ostrovskii \cite{MO}, and was known in the probability community since many years.
However, it seems that this paper went unnoticed in the PDE community.
We thank Rodrigo Ba\~nuelos for pointing out to us this reference.}.
More precisely, one adds an extra variable $y \in (0,\infty)$ and consider the function $\widetilde u(x,y)$ defined as the solution of
\[\left\{ \begin{array}{rcll}
\widetilde{u}(x,0)&=&u(x) &\textrm{in}\ \mathbb{R}^n,\\
L_a \widetilde{u}(x,y)&=& 0&\textrm{in}\ \mathbb{R}^{n+1}_{+},
\end{array}\right.\]
where $\R^{n+1}_+=\R^n\times (0,\infty)$ and
\begin{equation}\label{eq:La}
L_a\widetilde u:=-\divv_{x,y}\bigl(|y|^{a}\nabla_{x,y}\widetilde u\bigr),\qquad a:=1-2s.
\end{equation}
This function $\widetilde u$
can be obtained by minimizing the energy
$$
\min\biggl\{\int_{\R^{n+1}_+}|y|^a|\nabla_{x,y} v|^2\,dx\,dy\,:\, v(x,0)=u(x)\biggr\},
$$
and
 satisfies
\begin{equation}
\label{eq:frac y}
\lim_{y\downarrow 0} |y|^{a}\widetilde{u}_y(x,y)=(-\Delta)^su(x)\quad \textrm{in}\ \mathbb{R}^{n}.
\end{equation}
Moreover, $\widetilde{u}$ can be extended to the whole space $\R^{n+1}$ by even reflection, that is, $\widetilde{u}(x,y)=\widetilde{u}(x,-y)$, and then \eqref{pb}
becomes equivalent to
\begin{equation}\label{extpb}
\left\{ \begin{array}{rcll}
\widetilde u(x,0) &\geq&\varphi(x)&\quad\textrm{in}\ \R^n,\\
L_a \widetilde{u}&=&0& \quad \textrm{in}\ \mathbb{R}^{n+1}\setminus\left\{(x,0):\, u(x)=\varphi(x)\right\},\\
L_a \widetilde{u}&\geq&0& \quad \textrm{in}\ \mathbb{R}^{n+1},\\
\widetilde u(x,y)&\to &0&\quad \text{as $|(x,y)|\to\infty$}.
\end{array}\right.
\end{equation}
Notice that when $s=\frac12$ then $a=0$ and $L_a=\Delta_{x,y}$, so we recover the
Signorini problem (also called ``lower dimensional obstacle problem'' for the classical Laplacian).
This problem is interesting not only when stated on the whole space $\R^{n+1}$, but also in bounded
domains $\Omega\subset \R^{n+1}$ which are symmetric with respect to the hyperplane $\{y=0\}$
(see for instance \cite{Caf79,AC,ACS}).

Here we consider for simplicity the case when $\Omega=\BBone$
is the unit ball in $\R^{n+1}$, although our argument generalizes immediately to convex domains.
Since all our discussion holds for any $s\in (0,1)$,
we shall directly consider the local version of \eqref{pb} for all $s$, although
the most interesting case is when $s=\frac12$.
So, we set $B_1:=\BBone\cap \{y=0\}$, we consider
an obstacle $\varphi:B_1\to \R$ such that  $\varphi|_{\partial B_1}<0$, and we let $\widetilde u:\BBone\to \R$ be the solution of
\begin{equation}
\label{pb-sign}
\left\{ \begin{array}{rcll}
\widetilde u(x,0) &\geq&\varphi(x)&\textrm{on}\ \BBone\cap \{y=0\}, \\
L_a\widetilde u&=&0&\textrm{in}\ \BBone\setminus\bigl(\{y=0\}\cap\{u=\varphi\}\bigr), \\
L_a \widetilde u&\geq&0&\textrm{on}\ \BBone,\\
\widetilde u&=&0&\textrm{on}\ \partial \BBone,
\end{array}\right.
\end{equation}
where we use the notation $u(x)=\widetilde u(x,0)$.
Although often the assumption that $u$ vanished on ${\partial\BBone}$ is armless and it is just made to simplify the notation, for our results it will play a crucial role.

\begin{rem}
\label{rmk:u tilde u}
Since $\widetilde u$ is uniquely defined once its trace $u$ on $\{y=0\}$ is given,
by abuse of notation we shall say that a function $u:\R^n\to \R$ (resp. $u:B_1\to \R$) solves \eqref{extpb}
(resp. \eqref{pb-sign}) if its $L_a$-extension in $\R^{n+1}$ (resp. in $\BBone$) is a solution of  \eqref{extpb}
(resp. \eqref{pb-sign}).
\end{rem}

The main questions for both problems \eqref{extpb} and \eqref{pb-sign} are the regularity of $u$
and  the one of the boundary of the contact set $\{u=\varphi\}$ (also called ``free boundary'').
We next discuss the known regularity results on these questions.

\subsection{Known results}
Let us briefly discuss the known results about the regularity of the solution $u$ and of the free boundary $\partial \{u=\varphi\}$.
Before explaining the results in the fractional case, we first recall what is known in the case of the classical Laplacian.\smallskip

$\bullet$ {\bf The classical case.}
When $s=1$ (i.e., when the operator is the classical Laplacian) the obstacle problem \eqref{pb} is by now well understood.
Essentially, the main results establish that the solution $u \in C^{1,1}$,
and whenever $\Delta\varphi\leq -c_0<0$ then:\\
- the blow-up of $u$ at any free boundary point is a unique homogeneous polynomial of degree~2;\\
- the free boundary splits into the union of a set of regular points and a set of singular points;\\
- the set of regular points is an open subset of the free boundary of class $C^{1,\alpha}$;\\
- singular points are 
locally contained into a stratified union of $C^1$ submanifolds.\\
We refer to the classical papers of Caffarelli \cite{C-obst1,C-obst2} or to the recent book \cite{Sbook} for more details.
\smallskip

$\bullet$ {\bf The fractional case.}
For the fractional Laplacian, the first results obtained were for the Signorini problem
(corresponding to \eqref{extpb} or \eqref{pb-sign}
when $s=\frac12$): in \cite{AC} Athanasopoulos and Caffarelli obtain the optimal $C^{1,1/2}$ regularity of the solution $u$ when $\varphi\equiv 0$.
The general case $s \in (0,1)$ for  \eqref{extpb} and \eqref{pb-sign}  was investigated later by Silvestre and Caffarelli-Silvestre-Salsa \cite{S,CSS},
where the authors established, even for $\varphi\neq 0$, the optimal $C^{1+s}$ regularity.

Concerning the regularity of the free boundary,
in \cite{ACS} Athanasopoulos, Caffarelli, and Salsa investigate the Signorini problem for $\varphi\equiv 0$
and consider the so-called ``regular points'', consisting of the set of points
where the blow-up of $u$ at $x_0$ has homogeneity $1+s$ ($s=\frac12$), proving that this is
an open subset of the free boundary
of class $C^{1,\alpha}$.
This result was then extended to any $s \in (0,1)$ by Caffarelli, Salsa, and Silvestre \cite{CSS} for every smooth obstacle $\varphi\neq 0$.

Subsequently, again in the particular case $s=\frac12$, in \cite{GP} Garofalo and Petrosyan investigated the structure of
the set of singular points, that is the set of free boundary points where the contact set $\{u=\varphi\}$ has zero density,
proving a stratification results for such points in terms of the homogeneity of the blow-ups of $u$.
More recently, other generalizations and approaches to the Signorini and fractional obstacle problem have been investigated in \cite{GSVG,PP,GPSVG,FS}.

However, despite all these recent developments, the full structure of the free boundary was far from being understood: first of all, it was not even known whether the free boundary had Hausdorff dimension $n-1$.
In particular, a priori the free boundary could be a fractal set. Secondly,
the definitions of regular and singular points from \cite{CSS} and \cite{GP} do not exhaust all possible free boundary points, in the sense that they do not exclude the existence of other type of free boundary points,
and a priori the union of regular and singular points
may consists of a very small fraction of the whole free boundary.
Thus, the complete description of the free boundary was an open problem, even for
the case $s=\frac12$.

\subsection{Statement of the results}
The aim of this paper is to show that, for both problems \eqref{extpb} and \eqref{pb-sign}, under the assumption that the obstacle $\varphi$ satisfies $\Delta \varphi\leq  -c_0<0$,
regular and singular points {\em do exhaust} all possible free boundary points
(actually, in the case of the global problem \eqref{extpb} we only need the weaker inequality $\Delta \varphi \leq 0$).
In addition, we show that singular points are 
locally contained into a stratified union of $C^1$ submanifolds,
allowing us to obtain the same structure result on the free boundary as in the case $s=1$.

It is important to point out that, since our problem is non-local, one {does need} a {\em global} assumption in order to obtain such a result
(this in contrast with the case $s=1$, where all the assumptions are local).
In our case the global assumption is hidden in the fact that either we are considering the obstacle problem set in the whole $\R^n$,
or for the local case we are imposing zero  boundary conditions. It is easy to see that these assumptions are necessary
(see Remark \ref{rem-nondeg} below).
\smallskip

Before stating our result we introduce some notation: given an obstacle $\varphi$ and a solution $u$ to \eqref{extpb} or \eqref{pb-sign}
(recall the convention from Remark \ref{rmk:u tilde u}), we define the free boundary as
$$\Gamma(u):=\partial\{u=\varphi\}\subset \R^n.$$
A free boundary point $x_0\in\Gamma(u)$ is called {\it singular} if the contact set $\{u=\varphi\}$ has zero density at $x_0$, that is, if
\begin{equation}\label{singular}
\lim_{r\downarrow 0}\frac{\bigl|\{u=\varphi\}\cap \B(x_0)\bigr|}{|\B(x_0)|}=0.
\end{equation}
On the other hand, a free boundary point $x_0\in\Gamma(u)$ is called \emph{regular} if the homogeneity of the blow-up at $x_0$ is $1+s$.

We denote by $\Gamma_{1+s}(u)$ the set of regular points, and $\Gamma_2(u)$ the set of singular points
(the reason for  this notation will be clear from the theorem below).
Our main result is the following:

\begin{thm}\label{thm1}
Let $u$ be:
\begin{itemize}
\item[(A)] either a solution to the ``global'' problem \eqref{extpb}, with $\varphi:\R^n\to \R$ satisfying
\begin{equation}\label{obstacle g}
\varphi\in C^{3,\gamma}(\R^n),\qquad \Delta\varphi\leq 0\quad\textrm{in}\ \{\varphi>0\},\qquad \emptyset \neq \{\varphi>0\}\subset\subset \R^n,
\end{equation}
for some $\gamma>0$;
\item[(B)] or a solution to the ``local'' problem \eqref{pb-sign}, with $\varphi:B_1\to \R$ satisfying
\begin{equation}\label{obstacle l}
\varphi\in C^{3,\gamma}(B_1),\qquad \Delta\varphi\leq -c_0<0\quad\textrm{in}\ \{\varphi>0\},\qquad  \emptyset \neq \{\varphi>0\}\subset\subset B_1,
\end{equation}
for some $c_0>0$ and $\gamma>0$.
\end{itemize}
Then, at every singular point the blow-up of $u$ is a homogeneous polynomial of degree $2$,
and the free boundary can be decomposed as
\[\Gamma(u)=\Gamma_{1+s}(u)\cup \Gamma_2(u),\]
where $\Gamma_{1+s}(u)$ (resp. $\Gamma_2(u)$) is a open (resp. closed) subset of $\Gamma(u)$.

Moreover, $\Gamma_{1+s}(u)$ is a $(n-1)$-dimensional manifold of class $C^{1,\alpha}$,
while $\Gamma_2(u)$ can be stratified as the union of $\{\Gamma_2^k(u)\}_{k=0,\ldots,n-1}$,
where  $\Gamma_2^k(u)$ is contained in a $k$-dimensional manifold of class $C^1$.
\end{thm}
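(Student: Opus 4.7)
The strategy I would follow is modeled on Caffarelli's proof of the structure theorem for the classical obstacle problem \cite{C-obst1,C-obst2}, but carried out in the extended $(x,y)$-variables via the $L_a$-operator and relying on the new quadratic non-degeneracy that the abstract advertises. Let $v = u-\varphi$ on $\{y=0\}$ and consider its natural $L_a$-harmonic extension in $\mathcal B_1$. My first task is to establish the non-degeneracy estimate $\sup_{B_r(x_0)}(u-\varphi)\geq c\,r^2$ at every free boundary point $x_0\in\Gamma(u)$. I would argue by contradiction and rescaling: assume along a sequence $r_k\downarrow 0$ that $\sup_{B_{r_k}(x_0)}(u-\varphi)=o(r_k^2)$, and consider the rescalings $w_k(x,y)=\bigl[\widetilde u(x_0+r_kx,r_ky)-\widetilde\varphi(x_0+r_kx)\bigr]/r_k^2$. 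Using $\Delta\varphi\leq -c_0$ (or $\Delta\varphi\leq 0$ together with the global decay condition), one gets an inequality $L_a w_k\leq -c_0|y|^a$ on one side of $\{y=0\}$, and a sign condition on the other. Comparing with an explicit quadratic barrier and passing to the limit $w_k\to 0$ would contradict the strict sign of the source term. The global/zero boundary data hypothesis enters here by guaranteeing that $v$ is not identically zero past the contact region.

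With the non-degeneracy in hand, the second step is to classify blow-ups. Working with the extension and following \cite{ACS,CSS}, I would use the Almgren-type frequency functional
\[
N(r,x_0,\widetilde v)=\frac{r\int_{\mathcal B_r(x_0)}|y|^a|\nabla \widetilde v|^2}{\int_{\partial\mathcal B_r(x_0)}|y|^a \widetilde v^2},
\]
show it is monotone non-decreasing in $r$ at each free boundary point, and deduce that blow-ups $v_0(x,y)=\lim_{r\to 0}\widetilde v(x_0+rx,ry)/\rho(r)$ exist along subsequences and are homogeneous of some degree $\mu=N(0^+,x_0)$. The optimal $C^{1+s}$-regularity of \cite{CSS} forces $\mu\geq 1+s$, while the new non-degeneracy forces $\mu\leq 2$. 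Since the classification of homogeneous global solutions to the Signorini-type problem for $L_a$ (see \cite{CSS,GP}) prohibits intermediate homogeneities, only $\mu=1+s$ and $\mu=2$ survive. The former defines regular points, which by \cite{CSS} form the open $(n-1)$-dimensional $C^{1,\alpha}$ manifold $\Gamma_{1+s}(u)$; the latter are exactly the singular points in the sense of \eqref{singular}, because any $2$-homogeneous solution must be a polynomial whose zero set $\{p=0\}\cap\{y=0\}$ has Lebesgue measure zero in $\mathbb R^n$.

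The third step is the structure of $\Gamma_2(u)$. Here I would adapt Monneau's monotonicity formula to the weighted $L_a$-setting: for any $2$-homogeneous polynomial blow-up candidate $p$, the quantity
\[
M_p(r,x_0)=\frac{1}{r^{n+a+4}}\int_{\partial\mathcal B_r(x_0)}|y|^a\bigl(\widetilde v-\widetilde p\bigr)^2
\]
is monotone non-decreasing. This yields uniqueness of the blow-up $p_{x_0}$ at each singular point $x_0$, together with the continuity (in fact, a modulus of continuity) of the map $x_0\mapsto p_{x_0}$ on $\Gamma_2(u)$. Setting $\Gamma_2^k(u)=\{x_0\in\Gamma_2(u):\dim\ker D^2 p_{x_0}=n-k\}$ and invoking the Whitney extension theorem together with the implicit function theorem exactly as in \cite{C-obst2} produces the $k$-dimensional $C^1$ submanifolds that cover $\Gamma_2^k(u)$; that $\Gamma_{1+s}(u)$ is open in $\Gamma(u)$ (equivalently $\Gamma_2(u)$ is closed) follows from the upper semicontinuity of the frequency.

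The principal difficulty I anticipate is the very first step: producing the quadratic non-degeneracy $\sup_{B_r(x_0)}(u-\varphi)\geq c r^2$ at every free boundary point. All subsequent steps are adaptations of classical techniques (Almgren, Monneau, Whitney) to the weighted extension framework, which have become fairly standard since \cite{CSS,GP}; but this non-degeneracy is precisely what was missing in the literature to close the dichotomy $\mu\in\{1+s,2\}$, and it is forced to exploit a genuinely global piece of information (either the setting on all of $\mathbb R^n$ or the Dirichlet condition on $\partial\mathcal B_1$). Making the contradiction/rescaling argument rigorous in the non-local case and controlling the interaction between the sign of $\Delta\varphi$ and the $|y|^a$-weight at the boundary $\{y=0\}$ is where I expect the substantial work to be.
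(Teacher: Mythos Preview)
Your overall architecture matches the paper's: non-degeneracy $\Rightarrow$ frequency bounded above by $2$ $\Rightarrow$ dichotomy $\mu\in\{1+s,2\}$ $\Rightarrow$ Monneau-type formula for uniqueness at singular points $\Rightarrow$ Whitney extension/implicit function for the stratification. The later steps are correct up to technical variants: the paper works with the corrected function $v^{x_0}$ of \eqref{v} (not $\widetilde u-\varphi$) and a truncated, perturbed frequency $\Phi(r)=(r+C_0r^2)\,\frac{d}{dr}\log\max\{\mathcal H(r),\,r^{n+a+4+2\gamma}\}$; the Monneau functional is only \emph{almost} monotone, namely $\frac{d}{dr}\mathcal M\geq -Cr^{\gamma-1}$; and your stratification index is reversed (one takes $\dim\ker A^{x_0}=k$, not $n-k$).

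The genuine gap is in the non-degeneracy step, specifically in case (A). Your contradiction/rescaling scheme hinges on a \emph{strict} sign $L_a w_k\leq -c_0|y|^a$ so that the limit $w_\infty\equiv 0$ violates the equation. But in the global problem only $\Delta\varphi\leq 0$ is assumed, which gives merely $L_a w_k\leq 0$ away from the contact set, and then $w_\infty\equiv 0$ is perfectly consistent. The paper produces the missing strictness by a different, direct mechanism: the semigroup identity $(-\Delta)u=(-\Delta)^{1-s}\bigl[(-\Delta)^s u\bigr]$. Since $w:=(-\Delta)^s u\geq 0$ vanishes on $\{u>\varphi\}$ but is not identically zero (else $u\equiv 0$), one gets $(-\Delta)^{1-s}w(x_1)<0$ at every $x_1\in\{u>\varphi\}$, hence by compactness $\Delta u\geq c_0>0$ uniformly near the free boundary. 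Combined with $\Delta\varphi\leq 0$ this reduces (A) to a \emph{classical} obstacle situation: $u_1:=u-\varphi$ satisfies $\Delta u_1\geq c_0$ on $\{u_1>0\}$, and the maximum principle applied to $u_1-\tfrac{c_0}{2n}|x-x_1|^2$ gives non-degeneracy directly (Lemma~\ref{nondegeneracy0}). No rescaling, no contradiction.

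For case (B) your scheme is closer to viable since $\Delta\varphi\leq -c_0$ supplies the strict sign, but the paper again argues directly rather than by contradiction: with $w=\widetilde u-\varphi-\tfrac{c_0}{2n+2(1+a)}(|x-x_1|^2+y^2)$ one has $L_a w\geq 0$ off the contact set, so $\sup_{\mathcal B_r}w$ lies on $\partial\mathcal B_r$. The zero boundary data on $\partial\mathcal B_1$ enters only to show, via the maximum principle applied to $|y|^a\partial_y\widetilde u$, that $\widetilde u$ is nonincreasing in $|y|$, which forces the supremum of $\widetilde u-\varphi$ over $\mathcal B_r$ to sit on $\{y=0\}$ (Lemma~\ref{nondegeneracy}). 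This sidesteps the compactness issues your rescaling would face in controlling $w_k$ away from $\{y=0\}$.
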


As mentioned above, the $C^{1,\alpha}$ regularity of $\Gamma_{1+s}(u)$ was established in \cite{CSS},
so the main point of our result is the fact that $\Gamma(u)\setminus \Gamma_{1+s}(u)$ consists only of singular points,
that the blow-up of $u$ at these points has homogeneity $2$, and that $\Gamma_2(u)$ can be stratified into $C^1$ submanifolds. We remark that, if the obstacle is $C^\infty$, then also the set $\Gamma_{1+s}(u)$ is of class $C^\infty$ \cite{dSS,JN,KRS}.

A key ingredient in the proof of Theorem \ref{thm1} is the non-degeneracy condition
\begin{equation}
\label{eq:non deg}
\sup_{\B(x_0)}(u-\varphi)\geq c\,r^2,
\end{equation}
which we prove at all free boundary points $x_0$.
Thanks to this fact we can show that the homogeneity $m$ of any blow-up satisfies $m\leq 2$.
Then, by the results of \cite{CSS} we know that $m<2$ implies in fact that $m=1+s$, and thus we only have to study the set of free boundary points with homogeneity $m=2$.
For this, building upon some ideas used in \cite{GP} for the case $s=\frac12$, we show that the set $\Gamma_2(u)$ is contained in a union of $k$-dimensional $C^1$ submanifolds, $k=0,\ldots, n-1$.
Notice that in \cite{GP} the authors could only show that the set of singular free boundary points with even homogeneity is contained into a {\em countable} union of submanifolds,
while our result proves that, for any $k=0,\ldots, n-1$, there exists {\em one} $k$-dimensional $C^1$ manifold (not necessarily connected) which covers the whole set $\Gamma_2^k(u)$.
\smallskip

The paper is organized as follows.
In Section~\ref{sec-prel} we introduce some definition and show some basic properties of solutions.
In Section~\ref{sec2} we prove the non-degeneracy at free boundary points.
In Section~\ref{sec3} we prove an Almgren-type frequency formula.
In Section~\ref{sec4} we show that blow-ups are homogeneous of degree either $1+s$ or $2$.
In Section~\ref{sec5} we prove a Monneau-type monotonicity formula that extend the one obtained in \cite{GP} for $s=\frac12$.
Finally, in Section~\ref{sec6} we show uniqueness of blow-ups and establish Theorem~\ref{thm1}.
\smallskip

{\bf Acknowledgments.} BB has been partially supported by a postdoctoral fellowship
given by Fundaci\'on Ram\'on Areces (Spain) and MTM2013-40846-P, MINECO. AF has been partially supported
by NSF Grant DMS-1262411 and NSF Grant DMS-1361122.

\section{Preliminaries}
\label{sec-prel}

\subsection{Existence and uniqueness of solutions}
Although this is not the focus of our paper, we make few comments about the existence of solutions to problems
\eqref{extpb} and \eqref{pb-sign} under the assumption that $\varphi:\R^n\to \R$ is a continuous function
satisfying either $\{\varphi>0\}\subset\subset \R^n$ or $\{\varphi>0\}\subset\subset B_1$,
depending on the problem we are considering.

The existence and uniqueness of solutions to \eqref{pb-sign} is standard: $\widetilde u$
is the unique minimizer of the variational problem
$$
\min\biggl\{\int_{\BBone}|y|^a|\nabla_{x,y} v|^2\,dx\,dy\,:\, v(\cdot,0)\geq \varphi,\,v|_{\partial\BBone}=0\biggr\}.
$$
On the other hand, one must be a bit more careful with \eqref{extpb}:
a possible way to construct a solution is to consider the limit as $R\to \infty$ of the minimizers $\widetilde u_R$ to the problem
$$
\min\biggl\{\int_{\mathcal B_R}|y|^a|\nabla_{x,y} v|^2\,dx\,dy\,:\, v(\cdot,0)\geq \varphi,\,v|_{\partial \mathcal B_R}=0\biggr\}.
$$
It is easy to check (by a comparison principle)
that $\widetilde u_R \geq 0$ in $\mathcal B_R$ and that $\widetilde u_R \leq \widetilde u_{R'}|_{\mathcal B_R}$ if $R\leq R'$,
so $\widetilde u:=\lim_{R\to \infty}\widetilde u_R$ exists and solves the first three relations in \eqref{extpb}.
The only nontrivial point is whether $\widetilde u$ vanishes at infinity, and this is actually false in some cases
(this fact was already observed in \cite{S}).

To understand this point, consider first the simpler case $n=1$ and $s=1$ (even if we only consider $s \in (0,1)$,
the case $s=n=1$ allows one to understand what is happening).
This corresponds to take the limit of $u_R$ as $R\to \infty$, where $u_R$ minimizes the Dirichlet energy in $B_R$
among all functions $v\geq \varphi$ vanishing on $\partial B_R$.
Since the functions $u_R$ are harmonic (hence linear) outside the contact region $\{u_R=\varphi\}$, it is not difficult to check that the limit of $u_R$
is  the constant function $u\equiv \max_\R \varphi$. In particular we see that $u \not\to 0$ as $|x|\to \infty$.

We now show that
\begin{equation}
\label{eq:vanish}
\widetilde u(x,y)\to 0\quad \text{as $|(x,y)|\to\infty$}\qquad \text{for }
\left\{
\begin{array}{ll}
s \in \left(0,\frac12\right) &\text{if $n=1$,}\\
s \in (0,1) &\text{if $n\geq 2$}.
\end{array}
\right.
\end{equation}
To see this, consider the fundamental solution of $(-\Delta)^s$ given (up to a multiplicative constant) by
$$
G_{n,s}(x):=\frac{1}{|x|^{n-2s}},\qquad x\in\mathbb{R}^{n}\setminus\{0\},
$$
and extends it to $\R^{n+1}$ as
$$
\widetilde G_{n,s}(x,y):=\int_{\R^n}G_{n,s}(z) \,\frac{|y|^{2s}}{\left(|x-z|^2+y^2\right)^{(n+2s)/2}}\,dz.
$$
It can be easily checked that this functions satisfies
$$
L_a \widetilde G_{n,s}=0 \quad \text{in $\R^{n+1}\setminus \{0\}$},
\qquad L_a \widetilde G_{n,s} \geq 0 \quad \text{in $\R^{n+1}$,}
$$
(see for instance \cite[Section 2.4]{CSext}), so we can use this function as a barrier for our solution $\widetilde u$.
More precisely, consider $M>1$ large enough so that $M\,G_{n,s} \geq \varphi$.
Then it follows by comparison that $\widetilde u_R \leq M\,\widetilde G_{n,s}$ in $\R^{n+1}$
for all $R>0$, and letting $R\to \infty$ we obtain that $\widetilde u\leq M\, \widetilde G_{n,s}$. Since the latter function vanishes at infinity when $s \in (0,\frac12)$ and $n=1$ or $s \in (0,1)$ and $n\geq 2$,
this proves \eqref{eq:vanish}.

The discussion above shows why the cases $s\in[\frac12,1)$ and $n=1$ are critical: the Green function does not vanish at infinity.
Still it is worth noticing that, even in these cases, one could generalize our results to the
solution constructed above as the monotone limit of the functions $\widetilde u_R$.
However, to have a cleaner statement with precise boundary conditions at infinity, we have preferred to state our results for solutions to \eqref{extpb}.

\subsection{A useful transformation}
In Section~\ref{sec3} we shall prove an Almgren-type frequency formula to study free boundary points where the blow-ups of our solution $u$
have homogeneity at most $2$. While the study of free boundary points where the homogeneity is strictly less than $2$ was essentially done in \cite{CSS},
to investigate points with homogeneity equal to $2$ it will be important to replace $\widetilde u-\varphi$
with a suitable variant of it for which the $L_a$ operator is very small.

More precisely, given $\widetilde u$ solving either \eqref{extpb} or \eqref{pb-sign},
given a free boundary point $x_0\in\Gamma(u)$ we define
\begin{equation}\label{v}
v^{x_0}(x,y):=\widetilde{u}(x,y)-\varphi(x)+\frac{1}{2(1+a)}\left\{\Delta\varphi(x_0)\,y^2+(\nabla\Delta\varphi)(x_0)\cdot (x-x_0)\,y^2\right\}.
\end{equation}
First of all notice that
\[v^{x_0}(x,0)=u(x)-\varphi(x)\]
for all $x\in \R^n$, hence $v^{x_0}(x,0)\geq0$.
Furthermore, by \eqref{extpb} and \eqref{obstacle g} (resp. \eqref{pb-sign} and \eqref{obstacle l}), we have
\begin{eqnarray}\label{ext2}
|L_a v^{x_0}(x,y)|&=&|y|^{a}\bigl|-(\Delta_x\varphi)(x)+(\Delta_x\varphi)(x_0)+ (x-x_0)\cdot\nabla(\Delta_x\varphi)(x_0))\bigr|\nonumber\\
&\leq& C\,|y|^{a}|x-x_0|^{1+\gamma},
\end{eqnarray}
for every $(x,y)\in \mathbb{R}^{n+1}\setminus\left\{(x,0):\, v^{x_0}(x,0)=0\right\}$
(resp. for every $(x,y)\in \BBone\setminus\left\{(x,0):\, v^{x_0}(x,0)=0\right\}$).
Finally it is also important to observe that, since $\varphi\in C^{3,\gamma}$, then $v^{x_0}$ depends continuously on $x_0$.\\

Throughout the paper, we shall use $v$ instead of $v^{x_0}$ whenever the dependence on point $x_0$ is clear.
Also, given a point $x_0 \in \R^n$, we denote by $\BB(x_0,0)$ the ball in $\mathbb{R}^{n+1}$ of radius $r$ centered at $(x_0,0)$, by $\BBB(x_0,0)$ the upper
half ball $\BB(x_0,0)\cap\mathbb{R}^{n+1}_{+}$, and by $\B(x_0)$ the ball in $\R^n$ of radius $r$ centered at $x_0$.

\section{Nondegeneracy}
\label{sec2}

As mentioned in the introduction,
a key ingredient in the proof of Theorem \ref{thm1} is the validity of the non-degeneracy condition
\eqref{eq:non deg} at every free boundary point $x_0$.
We now give two different proofs of it, depending whether we are in the global or in the local situation.

We begin with the global case.
Recall that problems \eqref{extpb} and \eqref{pb} are equivalent, so we can use either formulations.

\begin{lem}\label{nondegeneracy0}
Let $u$ solve the obstacle problem \eqref{pb}, with $\varphi$ satisfying~\eqref{obstacle g}.
Then there exist constants $c_1,r_1>0$ such that the following holds:
for any $x_0\in \Gamma(u)$ we have
\[\sup_{\B(x_0)}\left(u-\varphi\right) \geq c_1r^2 \qquad \forall\,r \in (0,r_1).\]
\end{lem}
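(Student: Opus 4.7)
The plan is to argue by contradiction using the fact that $u$ is globally $s$-superharmonic ($(-\Delta)^s u\ge 0$ on $\R^n$) together with the global decay $u\to 0$ at infinity and the compact containment $\{\varphi>0\}\subset\subset\R^n$. After translation we may assume $x_0=0$; suppose for contradiction that there exist $r_k\downarrow 0$ with
\[
\varepsilon_k:=r_k^{-2}\sup_{B_{r_k}}(u-\varphi)\longrightarrow 0,
\]
so that $u(0)=\varphi(0)$ and $\varphi(z)\le u(z)\le\varphi(z)+\varepsilon_k r_k^2$ for every $z\in B_{r_k}$.

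The key step is to estimate $(-\Delta)^s u(0)$ from above by splitting the defining singular integral at $|z|=r_k$. On the inner region $B_{r_k}$, the squeeze on $u-\varphi$ combined with the Taylor expansion of $\varphi$ at $0$ (and the principal-value cancellation of the antisymmetric linear term) gives a contribution of order $O(r_k^{2-2s})+O(\varepsilon_k r_k^{2-2s})\to 0$, whose leading coefficient is proportional to $-\Delta\varphi(0)\ge 0$. On the outer region $\R^n\setminus B_{r_k}$, the pointwise bound $u\ge\varphi$ yields the upper bound $\int_{|z|>r_k}(\varphi(0)-\varphi(z))|z|^{-n-2s}\,dz$, which in turn equals the outer piece of $(-\Delta)^s\varphi(0)$. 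Here the global hypotheses enter in a decisive way: since $u\to 0$ at infinity while $u\ge 0\ge\varphi$ outside $\{\varphi>0\}$, the function $u-\varphi$ is strictly positive on a set of uniformly positive measure (and in fact by a uniformly positive amount) on some bounded annulus bounded away from $0$, which provides a strictly improved exterior estimate with a $k$-independent gap $\delta>0$.

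Combining the two estimates would give an inequality of the form
\[
0\le (-\Delta)^s u(0) \le (-\Delta)^s\varphi(0)-\delta+o(1)\qquad\text{as }k\to\infty,
\]
and by running the same argument at arbitrary free boundary points $x_0\in\Gamma(u)\subset\overline{\{\varphi>0\}}$ one should extract the quantitative non-degeneracy with a constant $c_1$ uniform in $x_0$. The \emph{main obstacle}, as I see it, is making the exterior gap $\delta$ genuinely quantitative and uniform in the base point $x_0$, and turning the qualitative chain of inequalities into a hard contradiction with $(-\Delta)^s u(0)\ge 0$. This is precisely the step where the global assumptions $u\to 0$ at infinity and $\{\varphi>0\}\subset\subset\R^n$ are indispensable, and it explains why the ensuing lemma (the local version, where this global far-field positivity is unavailable) requires instead the strict pointwise sign $\Delta\varphi\le -c_0<0$ as a substitute.
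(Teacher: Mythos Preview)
Your plan has a gap that goes beyond the uniformity issue you flag at the end: even if every estimate is granted, the inequality $0\le (-\Delta)^s u(0)\le (-\Delta)^s\varphi(0)-\delta+o(1)$ is \emph{not} a contradiction, because nothing in the hypotheses bounds the nonlocal quantity $(-\Delta)^s\varphi(x_0)$. The assumption is $\Delta\varphi\le 0$ on $\{\varphi>0\}$, which says nothing about $(-\Delta)^s\varphi$; for a typical compactly supported bump one has $(-\Delta)^s\varphi(x_0)>0$ at interior points, and it can certainly exceed your $\delta$. In fact, at any contact point one has the exact identity
\[
(-\Delta)^s u(x_0)=(-\Delta)^s\varphi(x_0)-c_{n,s}\int_{\R^n}\frac{(u-\varphi)(z)}{|z-x_0|^{n+2s}}\,dz,
\]
so your chain of inequalities is really an equality in disguise, and the contradiction hypothesis $\sup_{B_{r_k}}(u-\varphi)=o(r_k^2)$ plays no role in it. There is also a technical slip in the inner estimate: the bound $u-\varphi\le\varepsilon_k r_k^2$ on $B_{r_k}$ does not produce an $O(\varepsilon_k r_k^{2-2s})$ term, since $\int_{B_{r_k}}|z|^{-n-2s}\,dz=+\infty$; one needs pointwise decay of $u-\varphi$ at the origin (say from the $C^{1,s}$ regularity), after which the contradiction hypothesis again drops out.

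The paper's proof is structurally different and avoids any nonlocal computation at the free boundary. It sets $w:=(-\Delta)^s u\ge 0$, notes that $w\equiv 0$ on $\{u>\varphi\}$ but $w\not\equiv 0$ globally (this is where the global problem and $u\to 0$ at infinity enter), and uses the semigroup identity $-\Delta u=(-\Delta)^{1-s}w$. Since $w$ vanishes near any $x_1\in\{u>\varphi\}$, the integral defining $(-\Delta)^{1-s}w(x_1)$ is strictly negative, and by compactness of $\Gamma(u)$ one gets $\Delta u\ge c_0>0$ uniformly on $\{u>\varphi\}$ near the free boundary. Combined with $\Delta\varphi\le 0$ this gives $\Delta(u-\varphi)\ge c_0$ there, and the classical maximum-principle argument applied to $(u-\varphi)-\tfrac{c_0}{2n}|x-x_1|^2$ yields the quadratic growth directly, with no contradiction argument at all.
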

\begin{proof}
Let us consider the function $w:=(-\Delta)^{s} u$.
Notice that $w \geq 0$, and $w$ cannot be identically $0$ as otherwise $u$ would be a $s$-harmonic function globally bounded on $\R^n$,
hence constant. Since $u$ vanishes at infinity this would imply that $u\equiv 0$, which is impossible since by assumption $\emptyset \neq \{\varphi>0\}\subset \{u>0\}$.

Since $w=(-\Delta)^su$ vanishes in the set $\{u>\varphi\}$,
for any point $x_1 \in \{u>\varphi\}$ we have
$$
(-\Delta)^{1-s} w(x_1)=c_{n,1-s}\,\textrm{PV}\int_{\R^n}\frac{-w(z)\, dz}{|x_1-z|^{n+2(1-s)}}<0,$$
where the last inequality follows from the fact that $w \gneqq0$.
In particular, by compactness, we see that there exist constants $c_0,r_0>0$ such that
$$
(-\Delta)^{1-s} w(x_1) \leq -c_0<0
$$
for any $x_1 \in \{u>\varphi\}$ with ${\rm dist}\bigl(x_1,\Gamma(u)\bigr) \leq r_0$.
Now, since $u$ is a global solution, by the semigroup property of the fractional Laplacian we obtain that
\begin{equation}\label{trivial}
(-\Delta) u= (-\Delta)^{1-s} w \leq -c_0\qquad  \text{in } U_0:=\{u>\varphi\}\cap \left\{{\rm dist}\bigl(\cdot ,\Gamma(u)\bigr) \leq r_0\right\} .
\end{equation}
We now observe that, since $u>0$ on the contact set,
again by compactness there exists a constant $h_0>0$ such that
$$
\varphi\geq h_0\qquad \text{in }\{u=\varphi\}.
$$
In particular, by continuity of $\varphi$, there exists $r_1 \in (0,\frac{r_0}2)$ such that
\begin{equation}\label{trivial2}
\varphi>0\qquad  \text{in } U_1:=\{u>\varphi\}\cap \left\{{\rm dist}\bigl(\cdot ,\Gamma(u)\bigr) \leq 2\,r_1\right\}.
\end{equation}
Consider now an arbitrary point $x_1 \in U_1$ with ${\rm dist}\bigl(\cdot ,\Gamma(u)\bigr) \leq r_1$, and take $r \in (0,r_1)$.
Since $U_1\subset U_0$, it follows by \eqref{trivial}, \eqref{trivial2}, and \eqref{obstacle g}, that the function $u_1:=u-\varphi$ satisfies
$$\Delta u_1\geq c_0\qquad \text{in }\{u_1>0\}\cap B_r(x_1)=:\Lambda\cap B_r(x_1),$$
or equivalently
$$u_2(x):=u_1(x)-\frac{c_0}{2n}|x-x_1|^{2}$$
is sub-harmonic in $\Lambda\cap B_r(x_1)$. Hence, by the maximum principle,
$$
0<u_1(x_1)\leq \sup_{\Lambda\cap B_r(x_1)}u_2=\sup_{\partial \left(\Lambda\cap B_r(x_1)\right)}u_2,
$$
and noticing that $u_2<0$ on $\partial{\Lambda}\cap B_r(x_1)$ we deduce that
$$0<\sup_{\Lambda \cap \partial B_r(x_1)}u_2\leq \sup_{\partial B_r(x_1)}u_1-c_1r^2,\qquad c_1:=\frac{c_0}{2n}.$$
Since $x_1 \in U_1$ was arbitrary, the result follows by letting $x_1\rightarrow x_0$ .
\end{proof}

We now consider the local case.
As we shall see we now need a slightly stronger assumption on the obstacle, namely that $\Delta \varphi\leq -c_0<0$.
Notice that this is exactly the same assumption needed in the obstacle problem for the classical Laplacian,
and it is a peculiarity of the global problem and the non-locality of the fractional Laplacian
that allowed us to weaken this hypothesis in the previous lemma.

\begin{lem}\label{nondegeneracy}
Let $u$ solve the obstacle problem \eqref{pb-sign}, with $\varphi$ satisfying~\eqref{obstacle l}.
Then there exist constants $c_1,r_1>0$ such that the following holds:
for any $x_0\in \Gamma(u)$ we have
\[\sup_{\B(x_0)}\left(u-\varphi\right) \geq c_1r^2 \qquad \forall\,r \in (0,r_1).\]
\end{lem}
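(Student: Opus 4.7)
My plan is to adapt Caffarelli's classical non-degeneracy argument for the obstacle problem (see, e.g., the classical reference~\cite{Sbook}) to the weighted $L_a$-extension setting. The key novelty, absent in the classical case, is the need to transfer information from an $(n+1)$-dimensional sphere in the extended variables $(x,y)$ to the $n$-dimensional trace $\{y=0\}$.

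First I would reduce to proving the analogous bound at \emph{interior} points $x_1\in\{u>\varphi\}$ close to $x_0$:
\[\sup_{\B(x_1)}(u-\varphi)\ \geq\ (u-\varphi)(x_1)\ +\ c\,r^2.\]
The lemma then follows by letting $x_1\to x_0$ along a sequence in $\{u>\varphi\}$, which exists because $x_0\in\Gamma(u)=\partial\{u=\varphi\}$, and using continuity of $u-\varphi$. Note that since $\widetilde u\geq 0$ by the maximum principle and $u(x_0)=\varphi(x_0)$, we have $\varphi(x_0)\geq 0$; in the generic case $\varphi(x_0)>0$, continuity gives $\varphi>0$ on some neighborhood $B_{r_1}(x_0)$, so by~\eqref{obstacle l} $\Delta\varphi\leq -c_0$ there (the edge case $\varphi(x_0)=0$ can be handled by a perturbation argument).

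For the interior non-degeneracy, extend $\varphi$ trivially to $\R^{n+1}$ (independent of $y$) and let $w:=\widetilde u-\varphi$. Choose $r$ so that $\B(x_1)\subset B_{r_1}(x_0)\cap\{u>\varphi\}$; then $\BB(x_1,0)$ does not meet the contact set, and by the even extension of $\widetilde u$ across $\{y=0\}$ we have $L_a\widetilde u=0$ on $\BB(x_1,0)$, so
\[L_a w\ =\ -L_a\varphi\ =\ |y|^a\,\Delta\varphi\ \leq\ -c_0\,|y|^a\qquad\text{on}\ \BB(x_1,0).\]
Introduce the quadratic barrier
\[P(x,y)\ :=\ \frac{c_0}{2(n+1+a)}\bigl(|x-x_1|^2+y^2\bigr).\]
A direct computation using $L_a(|x-x_1|^2)=-2n|y|^a$ and $L_a(y^2)=-2(1+a)|y|^a$ gives $L_a P=-c_0|y|^a$, hence $W:=w-P$ satisfies $L_a W=|y|^a(\Delta\varphi+c_0)\leq 0$, i.e., $W$ is sub-$L_a$-harmonic on $\BB(x_1,0)$. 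Since $P\equiv\frac{c_0\,r^2}{2(n+1+a)}$ on the sphere $\partial\BB(x_1,0)$ and $W(x_1,0)=(u-\varphi)(x_1)$, the weak maximum principle yields
\[\sup_{\partial\BB(x_1,0)} w\ \geq\ (u-\varphi)(x_1)\ +\ \frac{c_0\,r^2}{2(n+1+a)}.\]

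The final step is to convert this bound on the $(n+1)$-dimensional sphere into a bound on the $n$-dimensional trace $\B(x_1)\times\{0\}$. Exploiting the even symmetry $w(x,-y)=w(x,y)$, the $L_a$-harmonicity of $\widetilde u$ on $\BB(x_1,0)$, and standard boundary Harnack / Poisson kernel estimates for $L_a$-harmonic functions in half-balls, one can bound $\sup_{\partial\BB(x_1,0)}w$ by a universal constant times $\sup_{B_{Cr}(x_1)}(u-\varphi)$. Absorbing $C$ via the rescaling $r\mapsto r/C$ gives the interior non-degeneracy, and passing $x_1\to x_0$ completes the argument. The \emph{main obstacle} is precisely this transfer step: the barrier construction and the max-principle argument are weighted analogues of Caffarelli's proof, but the passage from the $(n+1)$-dimensional sphere to the $n$-dimensional trace is specific to the extension formulation and requires nontrivial input from the quantitative theory of $L_a$-harmonic functions.
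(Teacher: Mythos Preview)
Your barrier construction and maximum-principle step are essentially the paper's. The restriction to balls $\BB(x_1,0)$ disjoint from the contact set is an unnecessary handicap (it makes the limit $x_1\to x_0$ vacuous for any fixed $r>0$), but is easily repaired by applying the maximum principle in $U:=\BB(x_1,0)\setminus(\{y=0\}\cap\{u=\varphi\})$ and noting that $W\leq 0$ on the contact-set portion of $\partial U$. The genuine gap is the transfer step from $\sup_{\partial\BB(x_1,0)}(\widetilde u-\varphi)$ to $\sup_{B_{Cr}(x_1)}(u-\varphi)$. Your argument never uses the boundary condition $\widetilde u|_{\partial\BBone}=0$, relying instead on ``boundary Harnack / Poisson kernel estimates'' that are local to $\BB(x_1,0)$. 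But this step \emph{cannot} be purely local: Remark~\ref{rem-nondeg} in the paper gives an explicit counterexample---a uniformly concave smooth obstacle and an $s$-harmonic $u$ in $B_1$ with $u(0)=\varphi(0)$ and $\sup_{B_r}(u-\varphi)\leq r^{30}$---showing that non-degeneracy fails once the zero boundary data is dropped. No local Harnack or Poisson-type estimate on $\BB(x_1,0)$ can bound $\widetilde u(x,y)$ for $y\neq 0$ in terms of the trace on a comparable $n$-ball, since the values of $\widetilde u$ away from $\{y=0\}$ are influenced by the outer data on $\partial\BBone$.

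The paper's transfer is global and direct: it proves $\widetilde u(x,y)\leq\widetilde u(x,0)=u(x)$ throughout $\BBone$ by showing $y^a\partial_y\widetilde u\leq 0$ in $\BBone\cap\{y>0\}$. Since $y^a\partial_y\widetilde u$ is $L_{-a}$-harmonic there, this follows from the maximum principle once the sign is checked on the boundary: on $\{y=0\}$ it comes from $L_a\widetilde u\geq 0$ via \eqref{eq:frac y}; on $\partial\BBone\cap\{y>0\}$ it comes precisely from $\widetilde u|_{\partial\BBone}=0$ combined with $\widetilde u\geq 0$ inside. With this monotonicity in hand, the supremum over $\BB(x_0,0)$ is already attained at $y=0$, and no Harnack input is needed. (Aside: your ``edge case $\varphi(x_0)=0$'' does not occur, since the strong maximum principle gives $u>0$ in $B_1$, hence $\varphi=u>0$ on the contact set and $\varphi\geq h_0>0$ there by compactness.)
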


\begin{proof}
As in the proof of Lemma \eqref{nondegeneracy0} we observe that,
since $u>0$ on the contact set,
by compactness there exists a constant $h_0>0$ such that
$$
\varphi\geq h_0\qquad \text{in }\{u=\varphi\}.
$$
In particular, by  \eqref{obstacle l} and the continuity of $\varphi$, there exists $r_1>0$ such that
$$
\varphi>0\qquad  \text{in } U_1:=\{u>\varphi\}\cap \left\{{\rm dist}\bigl(\cdot ,\Gamma(u)\bigr) \leq 2\,r_1\right\}\subset \subset B_1.
$$
Consider now an arbitrary point $x_1 \in U_1$ with ${\rm dist}\bigl(\cdot ,\Gamma(u)\bigr) \leq r_1$,
define the function
\[w(x,y):=\widetilde u(x,y)-\varphi(x)-\frac{c_0}{2n+2(1+a)}\bigl(|x-x_1|^2+y^2\bigr)\]
where $c_0$ is the constant in \eqref{obstacle l}, and consider $r \in (0,r_1)$.

Since
$$\mbox{$L_a \widetilde u=0$\quad in $\{y\neq0\}\cup\bigl\{ \{y=0\}\cap\{u>\varphi\} \bigr\}$}$$
we get that
\[L_aw(x,y)=L_a \widetilde u(x,y)-|y|^{a}\bigl(\Delta\varphi(x)+c_0\bigr)\geq 0\quad \textrm{in}\ U,\]
where
\[U:=\BB(x_1,0)\setminus\bigl( \{y=0\}\cap\{u=\varphi\}\bigr)\subset \subset \BBone.\]
Hence,
since $w<0$ in $ \{y=0\}\cap\{u=\varphi\}$ and $w(x_1,0)>0$, the maximum principle
yields
\begin{equation}\label{xx}
0<w(x_1)\leq \sup_{U}w=\sup_{\partial U}w.
\end{equation}
Noticing that $\partial U=\partial{\BB}(x_1,0)\cup \bigl( \{y=0\}\cap\{u=\varphi\}\bigr)$ and $w<0$ in $\bigl( \{y=0\}\cap\{u=\varphi\}\bigr)$, it follows by \eqref{xx} that
\[\sup_{\partial \BB(x_1,0)}w>0,\]
so, letting $x_1\rightarrow x_0$, we get
\begin{equation}
\label{sup BB}
\sup_{\BB(x_0,0)}(\widetilde u-\varphi)-\frac{c_0}{2n+2(1+a)}\,r^2\geq \sup_{\partial \BB(x_0,0)}(\widetilde u-\varphi)-\frac{c_0}{2n+2(1+a)}\,r^2\geq0.
\end{equation}
To conclude the proof we now show that this supremum is attained for $\{y=0\}$.

To prove this we first notice that, by symmetry, $\widetilde u(x,y)=\widetilde u(x,-y)$.
Also, since $L_a \widetilde u \geq 0$, $L_a\widetilde u=0$ outside the contact set $\{u=\varphi\}$, and $\widetilde u|_{\partial \BBone}=0$, it
follows by the maximum principle that $\widetilde u \geq 0$ in $\BBone$
and that $\widetilde u$ attains its maximum on the contact set $\{u=\varphi\}$.
Hence, using again that $\widetilde u=0$ on $\partial\BBone$, we deduce that
$$
|y|^a\partial_y\widetilde  u \leq 0 \quad \text{on }\partial\BBone\cap\{y>0\},
\qquad \lim_{y\to 0^+} |y|^a\partial_y\widetilde  u(x,y) \leq 0 \quad \text{on }\{u=\varphi\}.
$$
Also, since $u$ is even in $y$ and it is smooth outside the contact set,
we have
$$
\lim_{y\to 0^+} |y|^a\partial_y\widetilde  u(x,y)= 0 \quad \text{on }\{u>\varphi\}.
$$
Thus, since the function $y^a\partial_y\widetilde u$
is $L_{-a}$-harmonic in $\BBone^+$ (as can be easily checked by a direct computation), it follows by the maximum principle that
$y^a\partial_y \widetilde u\leq0$ in $\mathcal {B}_1^+$, that is,
$\widetilde u$ is decreasing with respect to $y$ inside $\BBone^+$.
Since $\widetilde u$ is even in $y$
this proves that
$$
\widetilde u(x,y)\leq \widetilde u(x,0)=u(x)\qquad \forall\,(x,y)\in \BBone,
$$
and \eqref{sup BB} yields
$$\sup_{\B(x_0)}(u-\varphi)=\sup_{\BB(x_0,0)}(\widetilde{u}-\varphi)\geq \frac{c_0}{2n+2(1+a)}\,r^2,$$
as desired.
\end{proof}

\begin{rem}\label{rem-nondeg}
It is worth noticing that the proof of Lemma \ref{nondegeneracy} works also in $\R^{n+1}$,
but has the drawback (with respect to Lemma \ref{nondegeneracy0}) of requiring that $\Delta \varphi\leq -c_0<0$.

In any case, it is important to observe that the non-degeneracy estimate at free boundary points
is a ``global'' property, in the sense that it
crucially relies on the fact that we are either studying the obstacle problem in the whole $\R^n$
or we are assuming zero boundary conditions:
indeed, while \eqref{sup BB} always holds independently of the value of $\widetilde u$ on $\partial \BBone$,
we need to know that $\R^+\ni y\mapsto \widetilde u(x,y)$ is decreasing
to prove non-degeneracy   (see the proof of Lemma \ref{nondegeneracy}).

To show that non-degeneracy does \emph{not} holds in bounded domains even if one assumes the obstacle $\varphi$ to be $C^\infty$ and uniformly concave,
consider a $s$-harmonic function $u$ in $B_1$ which satisfies $D^2u\leq -{\rm Id}$ in $B_{1/2}$ (the existence of such a function follows for instance from \cite{DSV}).
Then, by taking as obstacle a smooth function $\varphi\leq u$ satisfying $D^2\varphi\leq -{\rm Id}$, $u(0)=\varphi(0)$, and such that $\sup_{B_r}(u-\varphi)\leq r^{30}$ for $r$ small,
we see that non-degeneracy fails.
\end{rem}

We now want to transfer these non-degeneracy informations to the function $v(x,y)=v^{x_0}(x,y)$ defined in \eqref{v}.
Before doing that, we need the following weak-Harnack estimate:

\begin{lem}\label{fabes-kenig}
Let $z \in \R^{n+1}$, $r>0$, and let
$w$ satisfy $L_aw\leq |y|^a f$ in $\BB(z)$.
Then
\[\sup_{\mathcal{B}_{r/2}(z)}w\leq C\left(\frac{1}{r^{n+1+a}}\int_{\BB(z)}|y|^{a}w^2\,dx\,dy\right)^{1/2}+C\,r^2\|f\|_{L^\infty(\BB(z))},\]
for some constant $C$ depending only on $n$ and $s$.
\end{lem}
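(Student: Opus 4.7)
The strategy is to reduce to the homogeneous case $f=0$ by subtracting a quadratic barrier and then invoke the $L^\infty$-$L^2$ Moser estimate for nonnegative subsolutions in the degenerate elliptic setting developed by Fabes–Kenig–Serapioni. The latter applies directly to $L_a$ because $|y|^a$ is an $A_2$-Muckenhoupt weight on $\R^{n+1}$ precisely when $a\in(-1,1)$, i.e.\ $s\in(0,1)$, and its $A_2$ constant is translation invariant and depends only on $a$ (hence only on $s$). In particular the estimate holds uniformly over balls centered at arbitrary points $z\in\R^{n+1}$, regardless of whether they touch or straddle $\{y=0\}$.

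After translating we may assume $z=0$. Set $M:=\|f\|_{L^\infty(\BB)}$ and introduce the barrier
\[\phi(x,y):=\frac{M}{2n}\,|x|^2.\]
A direct computation gives $L_a\phi=-M|y|^a$ (pointwise off $\{y=0\}$, and in the weak $|y|^a$-weighted sense on all of $\BB$), so
\[L_a(w+\phi)\;\leq\; |y|^a f-M|y|^a\;\leq\;0\quad\text{in }\BB,\]
meaning that $w+\phi$ is a weak subsolution of the homogeneous equation $L_aU=0$ on $\BB$.

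Applying the weighted Moser iteration of Fabes–Kenig–Serapioni to the subsolution $w+\phi$ on $\BB$ yields
\[\sup_{\mathcal B_{r/2}}(w+\phi)\;\leq\; C\left(\frac{1}{r^{n+1+a}}\int_{\BB}|y|^a (w+\phi)^2\,dx\,dy\right)^{1/2},\]
with $C=C(n,s)$. Since $\phi\geq0$, the left-hand side dominates $\sup_{\mathcal B_{r/2}}w$; on the right-hand side we bound
\[\int_{\BB}|y|^a(w+\phi)^2\;\leq\;2\int_{\BB}|y|^a w^2+2\int_{\BB}|y|^a\phi^2\;\leq\;2\int_{\BB}|y|^a w^2+C\,M^2 r^{n+5+a},\]
and apply $(A+B)^{1/2}\leq A^{1/2}+B^{1/2}$ to obtain the two terms in the stated inequality.

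The only nontrivial ingredient is the Moser $L^\infty$-$L^2$ estimate for subsolutions of $A_2$-weighted divergence-form operators; this is the main obstacle, but it is entirely encapsulated in the Fabes–Kenig–Serapioni theory, so the real content of the lemma is just the elementary quadratic reduction to a homogeneous subsolution.
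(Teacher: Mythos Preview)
Your argument is correct and follows the same route as the paper: add a quadratic barrier whose $L_a$-image equals $-M|y|^a$ to reduce to a homogeneous subsolution, then invoke the Fabes--Kenig--Serapioni $L^\infty$--$L^2$ estimate; the paper's only difference is the choice of barrier, namely $\psi=\frac{M}{2(n+1+a)}\bigl(|x-\bar x|^2+(y-\bar y)^2\bigr)$ in place of your $\frac{M}{2n}|x|^2$. One small imprecision: $L_a$ is not translation-invariant in the $y$-variable, so ``translate to $z=0$'' should be read as translating only in $x$ (equivalently, take $\phi=\frac{M}{2n}|x-\bar x|^2$ without translating), after which your computation $L_a\phi=-M|y|^a$ and the bounds $0\le\phi\le\frac{M}{2n}r^2$ on $\BB(z)$ go through unchanged.
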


\begin{proof}
The result follows from the classical elliptic estimates of Fabes, Kenig, and Serapioni~\cite{FKS}.
Namely, write $z=(\bar x,\bar y) \in \R^n\times \R$ and  define
\[\psi(x,y):=\frac{1}{2(n+1+a)}\|f\|_{L^\infty(\BB(z))}\bigl(|x-\bar x|^2+(y-\bar y)^2\bigr),\]
so that $L_a\psi= -|y|^a\|f\|_{L^\infty(\BB(z))}$.

In this way $\omega:=w+\psi$ satisfies $L_a\omega \leq 0$,
and by  \cite[Theorem 2.3.1]{FKS} we get
\[\sup_{\mathcal{B}_{r/2}(z)}\omega \leq C\left(\frac{1}{r^{n+1+a}}\int_{\BB(z)}|y|^{a}\omega^2\,dx\,dy\right)^{1/2}.\]
The result  then follows easily noticing that $0 \leq \psi \leq Cr^2\|f\|_{L^\infty(\BB(z))}$ inside $\mathcal{B}_{r}(z)$.
\end{proof}

We can now prove the following:

\begin{cor}\label{non-degeneracy-v}
Let $u$ solve:
\begin{itemize}
\item[(A)] either the obstacle problem \eqref{extpb}, with $\varphi$ satisfying~\eqref{obstacle g};
\item[(B)] or the obstacle problem \eqref{pb-sign}, with $\varphi$ satisfying~\eqref{obstacle l}.
\end{itemize}
Let $c_1,r_1$ be as in Lemma \ref{nondegeneracy0} (in case (A)) or Lemma \ref{nondegeneracy} (in case (B)).
Also, let $x_0\in \R^n$ be a free boundary point, and let $v=v^{x_0}$ be defined as in \eqref{v}.
Then
\begin{equation}\label{corolar}
\sup_{\B(x_0)}v(x,0) \geq c_1r^2\qquad \forall\,r \in (0,r_1).
\end{equation}
Moreover there exist positive constants $c_2$ and $r_2$, independent of $x_0$,
such that
\begin{equation}\label{tormenta}
\int_{\BB(x_0,0)}|y|^{a}|v(x,y)|^2\,dx\,dy \geq c_2\,r^{n+a+5}\qquad \forall\,r \in (0,r_2).
\end{equation}
\end{cor}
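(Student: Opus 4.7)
The first bound \eqref{corolar} would be immediate from the definition \eqref{v}: since $v(x,0)=u(x)-\varphi(x)$, I would apply Lemma \ref{nondegeneracy0} (in case (A)) or Lemma \ref{nondegeneracy} (in case (B)) at the free boundary point $x_0$ to obtain precisely $\sup_{B_r(x_0)} v(\cdot,0)\geq c_1 r^2$ for all $r\in(0,r_1)$, with the same constants $c_1,r_1$ as in those lemmas.

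For the integral lower bound \eqref{tormenta}, my plan is to combine \eqref{corolar} with the weak Harnack estimate of Lemma \ref{fabes-kenig}. First I would invoke \eqref{corolar} at scale $r/4$ and, using the continuity of $u-\varphi$, select a point $x_\ast\in\overline{B_{r/4}(x_0)}$ such that
\[v(x_\ast,0)=u(x_\ast)-\varphi(x_\ast)\geq \tfrac{c_1}{16}\,r^2.\]
The choice of the smaller radius $r/4$ ensures that $\mathcal B_{r/2}(x_\ast,0)\subset \BB(x_0,0)$; in case (B) this ball is also contained in $\BBone$ provided $r$ is smaller than some $r_2>0$ that can be chosen uniformly in $x_0$ thanks to $\{\varphi>0\}\subset\subset B_1$.

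Next, I would apply Lemma \ref{fabes-kenig} on $\mathcal B_{r/2}(x_\ast,0)$. The estimate \eqref{ext2} yields $L_a v\leq |y|^a f$ with $\|f\|_{L^\infty(\mathcal B_{r/2}(x_\ast,0))}\leq C r^{1+\gamma}$, and the lemma then produces
\[\tfrac{c_1}{16} r^2\leq v(x_\ast,0)\leq \sup_{\mathcal B_{r/4}(x_\ast,0)} v \leq C\Bigl(\frac{1}{r^{n+1+a}}\int_{\mathcal B_{r/2}(x_\ast,0)} |y|^a v^2 \,dx\,dy\Bigr)^{1/2}+C r^{3+\gamma}.\]
Shrinking $r_2$ further so that $C r^{3+\gamma}\leq \tfrac{c_1}{32}r^2$ for $r<r_2$, the error term can be absorbed into the left-hand side, and squaring and using the inclusion $\mathcal B_{r/2}(x_\ast,0)\subset \BB(x_0,0)$ would conclude \eqref{tormenta} with exponent $n+5+a$ as predicted by the scaling $r^2\cdot r^{(n+1+a)/2}=r^{(n+5+a)/2}$.

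The one technical point I expect to be the main obstacle is that $L_a v$, viewed as a distribution on $\mathcal B_{r/2}(x_\ast,0)$, may carry a non-negative singular measure concentrated on $\{y=0\}\cap\{u=\varphi\}$ coming from $L_a\widetilde u\geq 0$, so \eqref{ext2} does not by itself show that $v$ is a weak subsolution of $L_a w\leq |y|^a f$. However $v\equiv 0$ on the support of that measure, and a standard Stampacchia truncation argument (testing the equation against $\theta_\delta((v-\varepsilon)^+)\phi$ with $\phi\geq 0$ and sending $\delta,\varepsilon\to 0$) should show that $v^+=\max(v,0)$ satisfies $L_a v^+\leq C r^{1+\gamma}|y|^a$ in the weak sense. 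Applying Lemma \ref{fabes-kenig} to $v^+$ rather than to $v$ is harmless since $v^+(x_\ast,0)=v(x_\ast,0)$ and $(v^+)^2\leq v^2$, and the rest of the argument then goes through verbatim, with uniformity of the constants in $x_0$ being automatic from $\{\varphi>0\}\subset\subset B_1$.
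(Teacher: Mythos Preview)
Your proposal is correct and follows essentially the same strategy as the paper: the paper also passes to $v^+$ to kill the singular contribution of $L_a\widetilde u$ on $\{y=0\}\cap\{u=\varphi\}$, then applies Lemma \ref{fabes-kenig} and combines it with \eqref{corolar} at a smaller radius. The only cosmetic difference is that the paper centers the Harnack ball directly at $(x_0,0)$ and lower bounds $\sup_{\mathcal B_{r/2}(x_0,0)}v^+$ by $\sup_{B_{r/2}(x_0)}v(\cdot,0)\geq c_1(r/2)^2$, rather than selecting an explicit maximizing point $x_\ast$.
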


\begin{proof}
Since $v(x,0)=u(x)-\varphi(x)$, \eqref{corolar} follows immediately from Lemmas \ref{nondegeneracy0}-\ref{nondegeneracy}, so we only need to prove \eqref{tormenta}.

For this we define the function $v^+:=\max\{v,0\}$ in $\R^{n+1}$ and notice that,
by \eqref{ext2},
\[L_av^+(x,y)\leq C|y|^a|x-x_0|^{1+\gamma}\quad\textrm{in}\ \{v^+>0\}.\]
Since $v^+ \geq 0$ we see that $L_av^+\leq 0$ in the set $\{v^+=0\}$, therefore
\begin{equation}\label{p18}
L_av^+\leq C|y|^a|x-x_0|^{1+\gamma}\quad\textrm{in}\ \R^{n+1}.
\end{equation}
This allows us to apply Lemma \ref{fabes-kenig} to deduce that
\[\sup_{\mathcal{B}_{r/2}(z)}v^+\leq C\left(\frac{1}{r^{n+1+a}}\int_{\BB(z)}|y|^{a}\bigl|v^+\bigr|^2\,dx\,dy\right)^{1/2}+C\,r^{3+\gamma},\]
for any $z\in \R^{n+1}$.
In particular, applying the estimate above with $z=(x_0,0)$,  \eqref{corolar} gives
\[C\left(\frac{1}{r^{n+1+a}}\int_{\BB(x_0,0)}|y|^{a}\bigl|v^+\bigr|^2\,dx\,dy\right)^{1/2}\geq c_1\,\Bigl(\frac{r}2\Bigr)^2-C\,r^{3+\gamma},\]
hence
\[\int_{\BB(x_0,0)}|y|^{a}|v(x,y)|^2\,dx\,dy \geq c_2 \,r^{n+a+5}\]
for $r$ small enough, as desired.
\end{proof}

\section{Frequency formula}
\label{sec3}

The main objective of this section is to establish an Almgren-type frequency formula similar to the ones in \cite{CSS,GP}. 
More precisely we prove the following:

\begin{prop}\label{Almgren}
Let $u$ solve:
\begin{itemize}
\item[(A)] either the obstacle problem \eqref{extpb}, with $\varphi$ satisfying~\eqref{obstacle g};
\item[(B)] or the obstacle problem \eqref{pb-sign}, with $\varphi$ satisfying~\eqref{obstacle l}.
\end{itemize}
Let $x_0\in \Gamma(u)$ be a free boundary point, let $v=v^{x_0}$ be defined as in \eqref{v},
and set
\begin{equation}\label{Hx_0}
\mathcal{H}^{x_0}(r,v):=\int_{\partial \BB(x_0,0)}{|y|^{a}v^2 }.
\end{equation}
Also, let $\gamma>0$  be as in \eqref{obstacle g}-\eqref{obstacle l}.
Then there exist constants $C_0,\, r_0>0$, independent of $x_0$, such that the function
$$r\mapsto \Phi^{x_0}(r,v):=\bigl(r+C_0\,r^2\bigr)\,\frac{d}{dr}\log \max\left\{\mathcal{H}^{x_0}(r,v),\ r^{n+a+4+2\gamma}\right\},$$
is monotone nondecreasing on $(0,r_0)$.
In particular the limit $\lim_{r\downarrow0}\Phi^{x_0}(r,v):=\Phi^{x_0}(0^+,v)$ exists.
\end{prop}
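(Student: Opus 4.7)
The plan is to mimic the classical Almgren derivation for $L_a$-harmonic functions, carefully tracking the error introduced by the fact that $v=v^{x_0}$ only satisfies $|L_a v|\leq C|y|^{a}|x-x_0|^{1+\gamma}$ rather than $L_a v=0$, and then use the $\max$ with $r^{n+a+4+2\gamma}$ to turn $\Phi^{x_0}$ into a monotone quantity even where the cancellation in $\mathcal{H}$ degenerates.

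\textbf{Step 1 (Derivatives of $\mathcal{H}$ and of a Dirichlet energy).}
First I rescale $\mathcal{H}^{x_0}(r,v)=r^{n+a}\int_{\partial \BBone}|y|^{a}v(x_0+r\cdot)^2$ and differentiate to get
\[
\frac{d}{dr}\mathcal{H}^{x_0}(r,v)=\frac{n+a}{r}\mathcal{H}^{x_0}(r,v)+2\int_{\partial \BB(x_0,0)}|y|^{a}v\,\partial_\nu v\,d\sigma.
\]
Introducing the companion quantity $\mathcal{D}^{x_0}(r,v):=\int_{\BB(x_0,0)}|y|^{a}|\nabla v|^2\,dx\,dy$ and integrating by parts, using that on $\{y=0\}$ either $v(\cdot,0)=0$ (on the contact set) or $\lim_{y\to0^+}|y|^{a}\partial_y v=0$ (off the contact set, by \eqref{eq:frac y} and the definition of $v$), the boundary integral on $\{y=0\}$ vanishes and I obtain
\[
\int_{\partial \BB(x_0,0)}|y|^{a}v\,\partial_\nu v\,d\sigma=\mathcal{D}^{x_0}(r,v)-\int_{\BB(x_0,0)} v\,L_a v\,dx\,dy.
\]
The error term is controlled by \eqref{ext2}: $\bigl|\int v\,L_a v\bigr|\leq Cr^{1+\gamma}\int_{\BB(x_0,0)}|y|^{a}|v|$, which after Cauchy--Schwarz yields $\leq Cr^{1+\gamma}\bigl(r\,\mathcal{H}^{x_0}(r,v)\bigr)^{1/2}\mathcal{H}^{x_0}(r,v)^{1/2}$ on an appropriate shell.

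\textbf{Step 2 (Pohozaev identity and Almgren estimate).}
Testing $L_a v = g$ (where $\|g\|_\infty\leq Cr^{1+\gamma}$ on $\BB(x_0,0)$) against the radial field $(X-(x_0,0))\cdot \nabla v$ and integrating by parts, I compute
\[
\frac{d}{dr}\mathcal{D}^{x_0}(r,v)=\frac{n+a-1}{r}\mathcal{D}^{x_0}(r,v)+2\int_{\partial \BB(x_0,0)}|y|^{a}(\partial_\nu v)^2\,d\sigma+E(r),
\]
with $|E(r)|\leq Cr^{\gamma}\bigl(\mathcal{D}^{x_0}(r,v)+r^{-1}\mathcal{H}^{x_0}(r,v)\bigr)$. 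Combining the expressions for $\mathcal{H}^{x_0}\!'$ and $\mathcal{D}^{x_0}\!'$, the Cauchy--Schwarz inequality on $\partial\BB(x_0,0)$ gives, after a direct logarithmic differentiation of the classical frequency $N(r):=r\mathcal{D}^{x_0}(r,v)/\mathcal{H}^{x_0}(r,v)$,
\[
\frac{d}{dr}\log N(r)\geq -\frac{C\,r^{\gamma}}{\,1\,}\cdot\Bigl(1+\frac{r^{n+a+4+2\gamma}}{\mathcal{H}^{x_0}(r,v)}\Bigr)\qquad\text{on}\quad\{\mathcal{H}^{x_0}(r,v)>r^{n+a+4+2\gamma}\}.
\]
In the region where $\mathcal{H}^{x_0}(r,v)$ stays above the threshold, the right-hand side is bounded below by $-Cr^{\gamma-1}$ after multiplication by $1/r$, which is exactly the slack that an additional factor $(1+C_0 r)$ (i.e.\ the $C_0 r^2$ correction in $r+C_0 r^2$) can absorb once $C_0$ is taken large enough depending only on $C$ and $\gamma$.

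\textbf{Step 3 (Handling the cutoff).}
Let $h(r):=\log\max\{\mathcal{H}^{x_0}(r,v),r^{n+a+4+2\gamma}\}$ and split $(0,r_0)$ into the open set $A=\{\mathcal{H}^{x_0}>r^{n+a+4+2\gamma}\}$ and its complement. On $A$, $h'(r)=\mathcal{H}^{x_0}\!'/\mathcal{H}^{x_0}=\frac{n+a}{r}+\frac{2}{r}N(r)-\frac{2}{\mathcal{H}^{x_0}}\int v L_a v$; rewriting $\Phi^{x_0}(r,v)=(1+C_0 r)\bigl[(n+a)+2N(r)\bigr]+O(r^{1+\gamma})$ and using Step 2 one verifies $(\Phi^{x_0})'\geq 0$ provided $C_0$ is large. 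On the complement, $\Phi^{x_0}(r,v)=(1+C_0 r)(n+a+4+2\gamma)$ is trivially nondecreasing. At any crossover point $\bar r$ one has $\mathcal{H}^{x_0}(\bar r,v)=\bar r^{n+a+4+2\gamma}$ and the one-sided limits from $A$ must satisfy $h'(\bar r^-)\leq(n+a+4+2\gamma)/\bar r$ (otherwise $\mathcal{H}^{x_0}-r^{n+a+4+2\gamma}$ would change sign in the wrong direction), so the jump at $\bar r$ is upward, preserving monotonicity. Existence of $\Phi^{x_0}(0^+,v)$ follows since a bounded-below monotone nondecreasing function admits a limit; the bound from below comes from $\Phi^{x_0}(r,v)\geq(1+C_0 r)(n+a)$ together with the non-degeneracy $\mathcal{H}^{x_0}(r,v)\geq c r^{n+a+4}$ implied by Corollary~\ref{non-degeneracy-v}.

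\textbf{Expected difficulty.} The main obstacle is Step 1 and in particular making rigorous that the boundary contribution on $\{y=0\}\cap\BB(x_0,0)$ in the Green-type identity is zero: on the contact set $v=0$ so the product $v\,\partial_\nu v$ vanishes formally, but one has to justify this via the complementarity conditions in \eqref{extpb}/\eqref{pb-sign} and the $C^{1+s}$-regularity of $\widetilde u$ from \cite{CSS}. The Pohozaev identity in Step 2 is likewise delicate near the free boundary, and the standard work-around is to apply it in an open subset slightly away from $\{y=0\}\cap\{u=\varphi\}$ and then pass to the limit using that $L_a v\geq 0$ as a distribution on $\{y=0\}\cap\{u=\varphi\}$ with $v=0$ there. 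Once these are in place, the algebraic rearrangement needed to pick $C_0$ and to match the exponent $n+a+4+2\gamma$ to the error $r^{1+\gamma}$ is essentially forced by dimensional analysis.
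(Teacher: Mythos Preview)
Your overall strategy matches the paper's: compute $\mathcal H'$, use a Rellich/Pohozaev identity for $\mathcal D'$, apply Cauchy--Schwarz on $\partial\BB$, bound the error coming from $L_av\neq 0$, and treat the two branches of the $\max$ separately. However, there is a genuine gap in Step~2.

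The error terms that arise in the logarithmic derivative do \emph{not} carry $\mathcal H$ in the denominator as you implicitly claim; they carry $\mathcal I$ (equivalently $\mathcal D$, up to a lower-order term). Concretely, the Pohozaev error $\frac{2}{r}\int_{\BB}\bigl((X-(x_0,0))\cdot\nabla v\bigr)L_av$ is bounded, via Cauchy--Schwarz with weight $|y|^a$, by $C\,\mathcal D^{1/2}\,r^{(n+1+a)/2+1+\gamma}$, not by $Cr^\gamma\mathcal D$; your form $|E(r)|\leq Cr^\gamma(\mathcal D+r^{-1}\mathcal H)$ secretly presupposes a lower bound on $\mathcal D$ that has not been established. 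When you then form the quotient $\mathcal E$ (paper's \eqref{E}), the denominator is $\mathcal I\sim\mathcal D$, and bounding $|\mathcal E|$ requires $\mathcal D(r,v)\geq c\,r^{n+a+3+2\gamma}$. This lower bound does \emph{not} follow from the threshold $\mathcal H>r^{n+a+4+2\gamma}$ alone: one needs a Poincar\'e-type inequality, namely Lemma~\ref{auxiliar_frecuency}, which states $\mathcal H(r,v)\leq \bar C\bigl(r\,\mathcal D(r,v)+r^{n+a+6+2\gamma}\bigr)$ and is proved via \cite[Lemma~2.13]{CSS} using that $v(x_0,0)=0$. Combining this with the threshold yields $\mathcal D\geq c\,r^{n+a+3+2\gamma}$ (paper's \eqref{D1}), and only then does the error $\mathcal E$ become bounded. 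You do not state or use this lemma, and without it the chain of inequalities in Step~2 breaks down. Note also that the final bound is $|\mathcal E|\leq C$, i.e.\ merely bounded, which is precisely why the correction factor is $(1+C_0r)$ with $C_0$ large; your claimed rate $-Cr^{\gamma-1}$ is not what the computation produces.

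Two minor points. In Step~3, the crossover discussion is unnecessary: since both branches yield nondecreasing functions of $r$ and the $\max$ of two functions has, at each point, logarithmic derivative at least the minimum of the two, monotonicity on each branch suffices (this is exactly the observation the paper borrows from \cite{CSS}). And the limit $\Phi(0^+,v)$ exists simply because $\Phi$ is monotone; no appeal to Corollary~\ref{non-degeneracy-v} is needed here.
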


To simplify the notation we shall denote $\Phi=\Phi^{x_0}$ and $\mathcal{H}=\mathcal{H}^{x_0}$ when no confusion is possible.
We notice that the result above is a modification of the one established in \cite[Theorem 3.1]{CSS},
which corresponds to the case $\gamma=0$ in our Proposition \ref{Almgren}.
There the authors can assume $\gamma=0$ since they only study free boundary points where the blow-ups
of $u$ have homogeneity strictly less than $2$. However, since our main focus is to
study free boundary points with homogeneity 2, we need to add a parameter $\gamma>0$ (and, for the same reason,
we need to consider the function $v$ instead of $\widetilde u-\varphi$).
Notice that, in the case $s=\frac12$, a similar Almgren-type frequency formula was used in \cite{GP} to study singular points of homogeneity $2m$ with $m$ integer.
When $m=1$, their frequency formula corresponds to $\gamma=1$ in Proposition \ref{Almgren}.
\\

To prove Proposition \ref{Almgren} we can assume without loss of generality that $x_0=0$.
We will denote by $\partial \BB$ (resp. $\BB$) the sets $\partial \BB(x_0,0)=\partial \BB(0,0)$ (resp. $\BB(x_0,0)=\BB(0,0)$).
Also, we shall use primes to denote derivatives with respect to $r$.

Before proving Proposition \ref{Almgren} we establish an auxiliary lemma that provides us with some upper bounds for the functions
\begin{equation}\label{HG}
\mathcal{G}(r,v):=\int_{\BB}{|y|^{a}v^2 }\qquad\mbox{and}\qquad \mathcal{H}(r,v)=\int_{\partial \BB}{|y|^{a}v^2 }=\mathcal{G}'(r,v).
\end{equation}

\begin{lem}\label{auxiliar_frecuency}
Let $v$ be as in Proposition \ref{Almgren}, and define
\begin{equation}\label{D}
\mathcal{D}(r,v):=\int_{\BB}{|y|^{a}|\nabla v|^{2}}.
\end{equation}
Then there exist constants $\bar{C},\, \bar{r}>0$, independent of $x_0$, such that
\begin{equation}\label{H1}
\mathcal{H}(r,v)\leq \bar{C}\left(r\,\mathcal{D}(r,v)+ r^{n+a+6+2\gamma}\right)\qquad \textrm{for all}\ r\in (0,\bar{r}),
\end{equation}
and
\begin{equation}\label{G1}
\mathcal{G}(r,v)\leq \bar{C}\left(r^2\,\mathcal{D}(r,v)+ r^{n+a+7+2\gamma}\right)\qquad \textrm{for all}\ r\in (0,\bar{r}).
\end{equation}
\end{lem}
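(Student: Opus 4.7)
I would establish \eqref{G1} first and then deduce \eqref{H1} from it by a weighted trace inequality. For the latter, on the ball $\BB = \BB(x_0,0)$, integrate the pointwise identity
\[\divv\bigl(|y|^a v^2 (X-(x_0,0))\bigr) = (n+1+a)|y|^a v^2 + 2|y|^a v\,(X-(x_0,0))\cdot \nabla v,\]
and use that $(X-(x_0,0))\cdot\nu = r$ on $\partial\BB$ to get
\[r\,\mathcal H(r,v) = (n+1+a)\,\mathcal G(r,v) + 2\int_\BB |y|^a v\,(X-(x_0,0))\cdot\nabla v\,dX.\]
Bounding the last term by $r\,\mathcal G(r,v)^{1/2}\mathcal D(r,v)^{1/2}$ via Cauchy-Schwarz and then applying Young's inequality yields the trace-type estimate $\mathcal H(r,v)\le (n+2+a)\,\mathcal G(r,v)/r + r\,\mathcal D(r,v)$. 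Substituting \eqref{G1} into this immediately gives \eqref{H1}.

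\textbf{Proof of \eqref{G1}.} I would split $v = v^+ - v^-$ with $v^\pm := \max\{\pm v,0\}$, so that $\mathcal G(r,v)=\mathcal G(r,v^+)+\mathcal G(r,v^-)$ and similarly for $\mathcal D$. Since $v(x,0)=u(x)-\varphi(x)\ge 0$, the negative part $v^-$ vanishes identically on $\{y=0\}\cap\BB$. Because $a<1$, writing $v^-(x,y) = \int_0^y \partial_t v^-(x,t)\,dt$ for $y>0$ and applying weighted Cauchy-Schwarz with the factorization $1 = t^{-a/2}\cdot t^{a/2}$ yields the pointwise bound
\[|v^-(x,y)|^2 \le \frac{y^{1-a}}{1-a}\int_0^y t^a|\partial_t v^-(x,t)|^2\,dt.\]
Multiplying by $y^a$, integrating over the upper half ball $\BB\cap\{y>0\}$, switching the order of integration, and using that $v$ (hence $v^-$) is even in $y$, one obtains the elementary Poincar\'e-type bound
\[\int_\BB |y|^a (v^-)^2\,dX \le \frac{r^2}{2(1-a)}\,\mathcal D(r,v).\]
For $v^+$, the relevant structure is that $v^+\ge 0$, $v^+(x_0,0) = 0$, and, by \eqref{p18}, $L_a v^+ \le C|y|^a|x-x_0|^{1+\gamma}$ in all of $\R^{n+1}$. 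Applying Lemma~\ref{fabes-kenig} with $w = v^+$ and $f = C|x-x_0|^{1+\gamma}$ on balls $\mathcal B_\rho(x_0,0)$ for $\rho\le r$ gives
\[\sup_{\mathcal B_{\rho/2}(x_0,0)}v^+ \le C\bigl(\rho^{-(n+1+a)/2}\,\mathcal G(\rho,v^+)^{1/2} + \rho^{3+\gamma}\bigr).\]
A dyadic/Campanato-type iteration of this inequality on balls shrinking to $(x_0,0)$, exploiting the pointwise vanishing $v^+(x_0,0) = 0$, converts the single-point condition into the integral Poincar\'e estimate
\[\int_\BB |y|^a (v^+)^2\,dX \le \bar C\bigl(r^2\,\mathcal D(r,v) + r^{n+a+7+2\gamma}\bigr),\]
the exponent $n+a+7+2\gamma$ being dictated by squaring the $\rho^{3+\gamma}$ remainder of Lemma~\ref{fabes-kenig} and integrating against the natural volume element $\rho^{n+a}d\rho$. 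Summing the $v^\pm$ contributions gives \eqref{G1}.

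\textbf{Main obstacle.} The $v^+$ bound is the delicate step: since $v^+$ vanishes only at the single point $(x_0,0)$, there is no abstract weighted Poincar\'e inequality available, and the integral estimate must be extracted from the approximate $L_a$-subharmonicity of $v^+$ via Lemma~\ref{fabes-kenig} combined with the dyadic iteration. Tracking the precise dependence of the iteration on the obstacle regularity exponent $\gamma$ is what produces the error $r^{n+a+7+2\gamma}$ (rather than a worse power), and is where the main technical work would lie.
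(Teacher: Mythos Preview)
Your strategy reverses the paper's: you aim for \eqref{G1} first and then recover \eqref{H1} via a trace identity, whereas the paper proves \eqref{H1} directly (citing \cite[Lemmas 2.9 and 2.13]{CSS}) and integrates in $r$ to obtain \eqref{G1}. Your trace step and your treatment of $v^-$ via a one-dimensional weighted Poincar\'e in the $y$-variable are both fine. The gap is in the $v^+$ step. Lemma~\ref{fabes-kenig} bounds $\sup v^+$ from above by the weighted $L^2$-norm of $v^+$, which is the wrong direction: you need $\mathcal G(r,v^+)$ controlled by $\mathcal D(r,v)$. Iterating the lemma on dyadic balls only gives $\sup_{\mathcal B_{\rho/2}}v^+\le C\sup_{\mathcal B_\rho}v^++C\rho^{3+\gamma}$ with $C>1$, which carries no information about $\mathcal D$ and yields no decay. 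More structurally, \eqref{p18} says that $v^+$ is approximately $L_a$-\emph{sub}harmonic, so its weighted spherical averages are bounded \emph{below} by $v^+(x_0,0)-Cr^{3+\gamma}$, a vacuous statement since $v^+\ge0$. To convert the single-point condition $v^+(x_0,0)=0$ into a Poincar\'e estimate you would need the opposite, namely approximate \emph{super}harmonicity $L_a v^+\ge -C|y|^a|x-x_0|^{1+\gamma}$; this fails because the truncation $v\mapsto v^+$ creates an uncontrolled nonnegative singular contribution to $L_a v^+$ along $\partial\{v<0\}\subset\{y\neq0\}$.

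The paper avoids this by working with $v$ itself rather than with $v^+$. Since $L_a v$ is a nonnegative measure on the contact set and $|L_a v|\le C|y|^a|x-x_0|^{1+\gamma}$ elsewhere by \eqref{ext2}, one has $L_a v\ge -C|y|^a|x-x_0|^{1+\gamma}$ globally. The mean-value inequality \cite[Lemma 2.9]{CSS} then gives
\[
\frac{1}{\omega_{n+a}r^{n+a}}\int_{\partial\BB}|y|^a v \le v(x_0,0)+Cr^{3+\gamma}=Cr^{3+\gamma},
\]
which upgrades the single-point vanishing to control of the spherical average; feeding this into the weighted Poincar\'e argument of \cite[Lemma 2.13]{CSS} yields \eqref{H1} directly, and \eqref{G1} follows by integration.
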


\begin{proof}
Notice that, by our assumption on the positivity set of $\varphi$ (see \eqref{obstacle g}-\eqref{obstacle l}),
the contact set $\{u=\varphi\}$ is compact. In particular, in the local case, there exists $\bar{r}>0$ such that $B_{\bar{r}}(x_0)\subset B_1$ for every $x_0\in\Gamma(u)$.

Let us consider $x_0=0$ and $0<r<\bar{r}$. Then by \cite[Lemma 2.9]{CSS} it follows that
$$v(0)\geq \frac{1}{\omega_{n+a}r^{n+a}}\int_{\partial \BB}{|y|^{a} v}- C\, r^{3+\gamma},$$
so one can follow the proof of \cite[Lemma 2.13]{CSS} to get
$$
\int_{\partial \BB}{|y|^{a} v^2}\leq C\, r\int_{\BB}{|y|^{a}|\nabla v|^{2}}+ C \,r^{(n+a)+6+2\gamma}.
$$
The previous inequality proves \eqref{H1}, and integrating it with respect to $r$ we obtain \eqref{G1}.
\end{proof}

Following the ideas developed in \cite{CSS}, we now prove the main result of this section.

\begin{proof}[Proof of Proposition \ref{Almgren}]
As observed in \cite[Proof of Theorem 3.1]{CSS},
in order to prove that $\Phi(r,v)$ is increasing one can
concentrate in each of the two values for the maximum separately.

Since in the case
$$
\Phi(r,v)=\bigl(r+C_0r^2\bigr)\,\frac{d}{dr}\log r^{n+a+4+2\gamma}=(1+C_0r)(n+a+4+2\gamma)
$$
the function $\Phi(\cdot,v)$ is clearly monotonically increasing, it is enough
to prove that $\Phi'(r,v)\geq0$ in the case $\mathcal{H}(r,v)>r^{n+a+4+2\gamma}$.

First of all we notice that, since
$$\mathcal{H}(r,v)=r^{n+a}\int_{\partial\BBone}{|y|^{a}v^{2}(rx,ry)},$$
it follows that
\begin{eqnarray}
\mathcal{H}'(r,v)&=&(n+a)\,\frac{\mathcal{H}(r,v)}{r}+2\, r^{n+a}\int_{\partial\BBone}{|y|^{a}v(rx,ry)\,\nabla v(rx,ry)\cdot(x,y)}\nonumber\\
&=&(n+a)\,\frac{\mathcal{H}(r,v)}{r}+2\,\mathcal{I}(r,v),\label{h'}
\end{eqnarray}
where
\begin{eqnarray}
\mathcal{I}(r,v)&:=&\int_{\partial \BB}{|y|^{a} v \,v_{\nu}}=\mathcal{D}(r,v)+\int_{\BB}{v\, \divv (|y|^{a}\nabla v)}\nonumber \\
&=&\mathcal{D}(r,v)-\int_{\BB}{v\, L_av}
\label{i}
\end{eqnarray}
(recall that $\divv (|y|^{a}\nabla v)=-L_a v$).
Hence
\begin{equation}\label{mastarde}
\Phi(r,v)=(n+a)\,(1+C_0r)+2\,r\,(1+C_0r)\,\frac{\mathcal{I}(r,v)}{\mathcal{H}(r,v)},
\end{equation}
and it is enough to show that $r\,(1+C_0r)\,\frac{\mathcal{I}(r,v)}{\mathcal{H}(r,v)}$ is monotone.
For that purpose we note that, since
$$\mathcal{D}'(r,v)=\frac{n+a-1}{r}\,\mathcal{D}(r,v)-\frac{2}{r}\int_{\BB}{\bigl((x,y)\cdot\nabla v\bigr)\, \divv(|y|^{a}\nabla v)}+2\int_{\partial \BB}{|y|^{a}v_{\nu}^{2}},$$
it follows by \eqref{i} that
\begin{eqnarray*}
\mathcal{I}'(r,v)&=&\frac{n+a-1}{r}\,\mathcal{I}(r,v)-\frac{n+a-1}{r}\int_{\BB}v\, \divv (|y|^{a}\nabla v)\\
& &-\,\frac{2}{r}\int_{\BB}{\bigl((x,y)\cdot\nabla v\bigr)\, \divv(|y|^{a}\nabla v)}+2\int_{\partial \BB}{|y|^{a}v_{\nu}^{2}}+\int_{\partial \BB}{v\, \divv (|y|^{a}\nabla v)}.
\end{eqnarray*}
Thus, recalling that $\divv (|y|^{a}\nabla v)=-L_a v$, by \eqref{h'} and the Cauchy-Schwarz inequality we obtain
$$
\frac{d}{dr}\logg\left(r\,(1+C_0r)\,\frac{\mathcal{I}(r,v)}{\mathcal{H}(r,v)}\right)\geq\frac{C_0}{1+C_0r}-\mathcal{E}(r,v),
$$
where
\begin{equation}\label{E}
\mathcal{E}(r,v):=\frac{-\frac{1}{r}\left(\int_{\BB}{\left[2\bigl((x,y)\cdot\nabla v\bigr)+(n+a-1)v\right]L_a v}\right)+\int_{\partial \BB}{vL_a v}}{\mathcal{I}(r,v)}.
\end{equation}
Since $\frac{C_0}{1+C_0r}\geq \frac{C_0}2$ provided $r \leq r_0:=\frac{1}{C_0}$
and $C_0$ can be chosen arbitrarily large, to conclude the proof it will be enough to show that $\mathcal{E}(r,v)$ is bounded
independently of~$r$.
For that, we will estimate separately each term of the numerator and denominator of this function.

Since $v$ satisfies \eqref{ext2} outside $\{v=0\}\cap \{y=0\}$ while $v\,L_av=0$ on the set $\{v=0\}\cap \{y=0\}$
(because $L_av$ is a signed measure),
using the Cauchy-Schwarz inequality, \eqref{i}, and Lemma \ref{auxiliar_frecuency}, we obtain that
\begin{eqnarray}
\mathcal{I}(r,v)&=&\mathcal{D}(r,v)-\int_{\BB} v\,L_av =\mathcal{D}(r,v)-\int_{\BB\setminus \{v=0\}} v\,L_av\nonumber\\
&\geq& \mathcal{D}(r,v)-2\left(\int_{\BB}{|y|^{a} v^2}\right)^{1/2}\left(\int_{\BB\setminus \{v=0\}}{|y|^{-a}\left(L_av\right)^2}\right)^{1/2}\label{milveces}\\
&\geq& \mathcal{D}(r,v)-2\,\mathcal{G}(r,v)^{1/2}\left(\int_{\BB}{|y|^{a}|x|^{2(1+\gamma)}}\right)^{1/2}\nonumber\\
&\geq& \mathcal{D}(r,v)-2\,\mathcal{G}(r,v)^{1/2}r^{\frac{n+1+a}{2}+1+\gamma}\nonumber\\
&\geq& \mathcal{D}(r,v)-C\left(\mathcal{D}(r,v)^{1/2}r^{\frac{n+1+a}{2}+2+\gamma}+r^{(n+1)+2(\gamma+2)+a}\right).\label{denominator}
\end{eqnarray}
Similarly, since $(x,y)\cdot\nabla v=0$ on the set $\{v=0\}\cap \{y=0\}$ we get
\begin{equation}\label{numerador1}
\left|\frac{1}{r}\int_{\BB}{\bigl((x,y)\cdot\nabla v\bigr)L_a v}\right|\leq C\, \mathcal{D}(r,v)^{1/2} r^{\frac{n+1+a}{2}+1+\gamma}
\end{equation}
and
\begin{equation}
\max\left\{\left|\frac{1}{r}\int_{\BB} v\, L_av\right|, \left|\int_{\partial \BB}{v\,L_av}\right|\right\}
\leq C\,\Bigl(\mathcal{D}(r,v)^{1/2} r^{\frac{n+1+a}{2}+1+\gamma}+r^{n+2(\gamma+2)+a}\Bigr).\label{numerador2}
\end{equation}
Thus, it follows by \eqref{E}-\eqref{numerador2} that
$$
|\mathcal{E}(r,v)|\leq C\, \frac{\mathcal{D}(r,v)^{1/2}r^{\frac{n+1+a}{2}+1+\gamma}+r^{n+2(\gamma+2)+a}}{\mathcal{D}(r,v)-C\left(\mathcal{D}(r,v)^{1/2}r^{\frac{n+1+a}{2}+2+\gamma}+r^{(n+1)+2(\gamma+2)+a}\right)}.
$$
Now, recalling that $\mathcal{H}(r,v)>r^{n+a+4+2\gamma}$,
thanks to $\eqref{H1}$ we get
\begin{equation}\label{D1}
\mathcal{D}(r,v)\geq c \,r^{n+a+3+2\gamma},
\end{equation}
and  the previous inequality implies that
$$|\mathcal{E}(r,v)|\leq \frac{C\,\mathcal{D}(r,v)^{1/2}r^{\frac{n+1+a}{2}+1+\gamma}}{\mathcal{D}(r,v)}.$$
Thanks to \eqref{D1} we finally obtain that $|\mathcal{E}(r,v)|\leq C$, as desired.
\end{proof}

\begin{rem}\label{pita}
We note here that, from the computations done in the previous proof (see \eqref{i} and \eqref{mastarde}), if $\mathcal{H}(r,v)\geq Cr^{n+a+4+2\gamma}$ then
\begin{equation}\label{igualdad_imp}
\Phi(r,v)= (1+C_0r)\left((n+a)+2\,\mathcal{N}(r,v)-2r\,\frac{\int_{\BB}{v\, L_av}}{\mathcal{H}(r,v)}\right),
\end{equation}
where
$$\mathcal{N}(r,v):=\frac{ r\,\int_{\BB}{|y|^{a}|\nabla v|^{2}}}{\int_{\partial \BB}{|y|^{a}v^2 }}=\frac{r \,\mathcal{D}(r,v)}{\mathcal{H}(r,v)}$$
is the classic Almgren's frequency function.
As we shall see in the next section, thanks to the non-degeneracy condition that we proved in the previous section,
the last term in \eqref{igualdad_imp} goes to zero as $r\downarrow0$ and therefore
\[\Phi(0^+,v)=n+a+2\,\mathcal{N}(0^+,u).\]
\end{rem}

\section{Blow-ups}
\label{sec4}

In this section we will use the Almgren-type monotonicity formula and the non-degeneracy results of the previous sections to show that, at any free boundary point, there exists a blow-up $v_0$ which is homogeneous and whose degree is either $m=1+s$ or $m=2$.

\begin{prop}\label{phi-u}
Let $v,r_0$ be as in Proposition \ref{Almgren}.
Then
\[\Phi(0^{+},v)=n+a+2m\]
with
\[m=1+s\qquad\textrm{or}\qquad m=2.\]
Moreover there exists a constant $\bar C>0$, independent of the free boundary point $x_0$,
such that
\begin{equation}\label{ineq-H1}
\mathcal{H}(r,v)\leq \bar C\,r^{n+a+2m}\qquad \forall\,r \in (0,r_0),
\end{equation}
and for every $\varepsilon>0$ there exists $r_{\varepsilon,x_0}>0$ such that
\begin{equation}\label{ineq-H2}
 \mathcal{H}(r,v)\geq r^{n+a+2m+\varepsilon} \qquad \forall\,r \in (0,r_{\varepsilon,x_0}).
\end{equation}
\end{prop}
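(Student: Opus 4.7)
The plan is to combine Almgren's monotonicity (Proposition \ref{Almgren}), the non-degeneracy of Corollary \ref{non-degeneracy-v}, and the classification of homogeneous global solutions of the Signorini problem from \cite{CSS}. Proposition \ref{Almgren} already yields the existence of $\Phi(0^+,v)$, so I begin by \emph{defining} $m$ through $\Phi(0^+,v)=:n+a+2m$; the real content is to establish the two-sided bounds \eqref{ineq-H1}-\eqref{ineq-H2} and to pin down $m\in\{1+s,2\}$.

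For \eqref{ineq-H1} I would set $F(r):=\log\max\{\mathcal H(r,v),r^{n+a+4+2\gamma}\}$, so that $\Phi(r,v)=r(1+C_0 r)F'(r)$. Monotonicity gives $\Phi(r,v)\ge n+a+2m$ on $(0,r_0)$, hence $F'(r)\ge (n+a+2m)/[r(1+C_0 r)]$. Integrating from $r$ up to a fixed $\rho_0\in(0,r_0)$ and exponentiating produces $\max\{\mathcal H(r,v),r^{n+a+4+2\gamma}\}\le \bar C\,r^{n+a+2m}$, which is \eqref{ineq-H1}. To obtain $m\le 2$, I would compare this upper bound with the non-degeneracy \eqref{tormenta}: since $\mathcal G(r,v)=\int_0^r \mathcal H(\rho,v)\,d\rho\le \tfrac{\bar C}{n+a+2m+1}\,r^{n+a+2m+1}$, while \eqref{tormenta} forces $\mathcal G(r,v)\ge c_2\,r^{n+a+5}$, one deduces $c_2\le C'\,r^{2(2-m)}$, which is impossible as $r\downarrow 0$ unless $m\le 2$.

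For \eqref{ineq-H2}, given $\varepsilon>0$ I pick $\delta=\varepsilon/2$. Monotonicity of $\Phi$ yields $\Phi(r,v)\le n+a+2m+\delta$ for $r$ below some threshold $r_{\varepsilon,x_0}$, and integrating the reversed inequality on $F'$ gives $\max\{\mathcal H(r,v),r^{n+a+4+2\gamma}\}\ge c\,r^{n+a+2m+\delta}$. Since $m\le 2$ and $\delta$ is small, $2m+\delta<4+2\gamma$, so the $r^{n+a+4+2\gamma}$ contribution is strictly smaller than the right-hand side once $r$ is small enough; thus the maximum is attained by $\mathcal H(r,v)$, and absorbing $c$ into a further power of $r$ gives $\mathcal H(r,v)\ge r^{n+a+2m+\varepsilon}$.

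It remains to pin down $m\in\{1+s,2\}$. I would perform the standard blow-up: rescale $v$ by $v_r(x,y):=v(rx,ry)/(r^{-n-a}\mathcal H(r,v))^{1/2}$, so that $\int_{\partial\mathcal B_1}|y|^a v_r^2=1$; a direct scaling identity shows that the classical Almgren frequency $\mathcal N(1,v_r)$ equals $\mathcal N(r,v)$, and Remark \ref{pita} (whose error term vanishes thanks to \eqref{ineq-H2} combined with \eqref{ext2}) forces $\mathcal N(r,v)\to m$. Moreover, \eqref{ext2} together with the normalization shows that $L_a v_r\to 0$ locally, and Lemma \ref{fabes-kenig} provides uniform weighted $H^1$-bounds on the family $\{v_r\}$. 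Extracting a limit $v_0$, one checks it is a nontrivial global solution of the Signorini problem with zero obstacle whose Almgren frequency is constantly $m$, hence $v_0$ is homogeneous of degree $m$. The classification of \cite{CSS} excludes every $m\in(0,1+s)\cup(1+s,2)$, and combined with $m\le 2$ we conclude $m\in\{1+s,2\}$. The main obstacle is this last step: verifying that the limit $v_0$ genuinely inherits the Signorini complementarity conditions so that the \cite{CSS} classification applies; the bounds \eqref{ineq-H1}-\eqref{ineq-H2} are precisely what guarantees both compactness and non-triviality of $v_0$.
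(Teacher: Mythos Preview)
Your argument for \eqref{ineq-H1}, the bound $m\le 2$, and \eqref{ineq-H2} is exactly the paper's: integrate the monotonicity inequality for $\Phi$ to control $\log\max\{\mathcal H(r,v),r^{n+a+4+2\gamma}\}$ from both sides, then compare the resulting upper bound on $\mathcal G(r,v)$ against the non-degeneracy \eqref{tormenta} to force $m\le 2$, and finally use $m\le 2$ to discard the $r^{n+a+4+2\gamma}$ term in the lower bound.

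The only difference is in the last step. You propose to run the full blow-up argument here (compactness, limit $v_0$, Signorini conditions for $v_0$, constant frequency $m$, homogeneity, classification). The paper instead takes a shortcut: once $m\le 2$ is known, if $m<2$ then choosing $\varepsilon$ with $2m+\varepsilon<4$ in \eqref{ineq-H2} gives $\liminf_{r\to 0} d_r/r^2=\infty$ for $d_r:=(r^{-n-a}\mathcal H(r,v))^{1/2}$, which is exactly the hypothesis of \cite[Lemma~6.2]{CSS}, and that lemma directly outputs $m=1+s$. The blow-up analysis you describe is not wasted, though---the paper carries it out in the next proposition (Proposition~\ref{phi-u2}), where it is needed anyway to identify the blow-up profile. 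So your route is correct but front-loads work; the paper's route is shorter here at the cost of a black-box citation.

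One small correction: the uniform weighted $H^1$ bound on $\{v_r\}$ does not come from Lemma~\ref{fabes-kenig} (that lemma gives a sup bound from weighted $L^2$). It comes from the frequency bound $r\,\mathcal D(r,v)\le C\,\mathcal H(r,v)$ (a consequence of $\Phi(r,v)\le\Phi(r_0,v)$ and \eqref{igualdad_imp}) together with the normalization $\int_{\partial\mathcal B_1}|y|^a v_r^2=1$.
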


\begin{proof}
Let $m$ be such that
$${\Phi(0^+,v)}=n+a+2m.$$
We claim that $m\leq 2$.

Indeed, since $\Phi$ is monotone nondecreasing (by Proposition \ref{Almgren}), it follows by the definition of $m$ and $\Phi$ that, for every $\varepsilon>0$,
$$n+a+2m\leq \bigl(r+C_0r^2\bigr)\,\frac{d}{dr}\log \max\left\{\mathcal{H}(r,v),\ r^{n+a+4+2\gamma}\right\}\leq n+a+2m+\frac{\varepsilon}{2},$$
for $r$ sufficiently small (more precisely, while the first inequality holds for all $r \in (0,r_0)$, for the second inequality
one needs to take $r$ small enough, the smallness possibly depending both on $x_0$ and $\varepsilon$).
Integrating with respect to $r$, this implies that there exists a constant $C_1$ such that
\begin{equation}\label{tormenta1}
\log r^{n+a+2m}+C_1\geq  \log \max\left\{\mathcal{H}(r,v),\ r^{n+a+4+2\gamma}\right\} \geq \log r^{n+a+2m+\varepsilon/2}-C_1.
\end{equation}
In particular the first inequality above yields the validity of
\eqref{ineq-H1} with $\bar C=e^{C_1}$.

Integrating \eqref{ineq-H1} with respect to $r$ and recalling that $\mathcal H=\mathcal G'$ (see \eqref{HG}) we obtain
$$\int_{\BB(x_0,0)}|y|^{a}|v(x,y)|^2\,dx\,dy\leq C \,r^{n+a+2m+1}\qquad \forall\, r\in (0, r_0),$$
that combined with \eqref{tormenta} proves that $m\leq 2$.

Assuming now without loss of generality that $\varepsilon<2\gamma$, since $m \leq 2$ we see that
the inequality $r^{n+a+4+2\gamma} \leq r^{n+a+4+\varepsilon}\leq e^{-C_1} r^{n+a+2m+\varepsilon/2}$ holds for $r \ll 1$, so \eqref{ineq-H2} follows by the second inequality in \eqref{tormenta1}.

To conclude the proof we notice that, if $m<2$, we can take $\varepsilon>0$ such that $2m+\varepsilon<4$.
In this way, if we set
\begin{equation}\label{dr}
d_r:=\left(\frac{\mathcal{H}^{x_0}(r,v)}{r^{n+a}}\right)^{1/2},
\end{equation}
 it follows by \eqref{ineq-H2} that
\begin{equation}\label{condicionCSS}
\lim\inf_{r\to 0}\frac{d_r}{r^{2}}=\infty,
\end{equation}
and \cite[Lemma 6.2]{CSS} shows
that the only possible homogeneity for a blow-up of $v$ is $1+s$.
\end{proof}

\begin{rem}\label{rmk:vLv}
Notice that, as in \eqref{milveces}, the Cauchy-Schwarz inequality and \eqref{ext2} yields
\begin{equation}\label{milveces2}
\left|\int_{\BB(x_0,0)}{v\, L_av}\right|\leq C\,\mathcal{G}(r,v)^{1/2}r^{\frac{n+1+a}{2}+1+\gamma}.
\end{equation}
Also, since $\mathcal G'=\mathcal H$, it follows by \eqref{ineq-H1} that
$$
\mathcal{G}(r,v)\leq \bar C\,r^{n+a+2m+1},
$$
therefore
\begin{equation}\label{vLv}
\left|\int_{\BB(x_0,0)}{v\, L_av}\right|\leq C\,r^{n+a+m+2+\gamma} \qquad \forall\,r \in (0,r_0).
\end{equation}
Since $m\leq 2$, choosing $\varepsilon\leq \gamma$ in \eqref{ineq-H2} we see that
\begin{equation}\label{vLv2}\lim_{r\to 0^+}{2\,r\,\frac{\int_{\BB(x_0,0)}{v\, L_av}}{\mathcal{H}(r,v)}}=0.\end{equation}
Therefore, as we announced in Remark \ref{pita}, taking the limit as $r \to 0^+$ in \eqref{igualdad_imp} we obtain
\begin{equation}\label{igualdad_imp2}
\Phi(0^+,v)=(n+a)+2\,\mathcal{N}(0^+,v),
\end{equation}
that is, the value $m$ in Proposition \ref{phi-u} coincides with the value of the classic Almgren's formula for the point $x_0\in\Gamma(u)$.
\end{rem}

We next show the following:

\begin{prop}\label{phi-u2}
Let $v=v^{x_0}$ be as in Proposition \ref{Almgren}, set
\begin{equation}\label{m_}
m:=\frac{\Phi^{x_0}(0^{+}, v)-n-a}{2},
\end{equation}
and let
\[v_r(x,y)=v^{x_0}_r(x,y):=\frac{v(x_0+rx,ry)}{d_r}\]
be a blow-up sequence, where $d_r$ is defined in \eqref{dr}.
Then, up to a subsequence, $v_r$ converge as $r \to 0^+$ to a homogeneous function $v_0$, which is nonnegative in $\{y=0\}$ and homogeneous of degree $m$.

Moreover:
\begin{itemize}
\item[(a)] either
\[m=1+s\qquad \textrm{and}\qquad v^{}_0(x,0)=c\,((x-x_0)\cdot\nu)_{+}^{1+s}\]
for some $\nu\in \mathbb{S}^{n-1}$ and some positive constant $c$;
\item[(b)] or
\[m=2,\qquad v^{}_0(x,0)\ \textrm{is a polynomial of degree 2},\]
and $x_0$ is a singular point.
\end{itemize}
\end{prop}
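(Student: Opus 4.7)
The plan is to follow the standard blow-up scheme in three steps: (i) compactness to extract a subsequential limit $v_0$ of $\{v_r\}$, (ii) homogeneity of $v_0$ from the Almgren frequency formula, and (iii) classification of the $m$-homogeneous profiles --- via \cite{CSS} for $m=1+s$ and a Liouville-type argument for $m=2$.

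\textbf{Compactness.} By construction $\mathcal{H}(1,v_r) = 1$. A direct scaling computation gives
\[
L_a v_r(x,y) = \frac{r^{2-a}}{d_r}\,L_a v(x_0+rx, ry),
\]
so \eqref{ext2} yields $|L_a v_r(x,y)| \leq C(r^{3+\gamma}/d_r)\,|y|^a|x|^{1+\gamma}$. Since $m \leq 2$, taking $\varepsilon < 2\gamma$ in \eqref{ineq-H2} gives $d_r \geq r^{m+\varepsilon/2}$, hence $r^{3+\gamma}/d_r \to 0$. The scaling identity $\mathcal{N}(R,v_r) = \mathcal{N}(Rr,v)$ together with Remark~\ref{rmk:vLv} and \eqref{igualdad_imp2} yields $\mathcal{N}(R,v_r) \to m$ uniformly on compact subsets of $(0,\infty)$. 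Integrating $\tfrac{d}{dR}\log\mathcal{H}(R,v_r) = (n+a+2\mathcal{N}(R,v_r))/R + o(1)$ from the normalization at $R=1$ gives uniform two-sided bounds on $\mathcal{H}(R,v_r)$; a Caccioppoli argument then yields uniform bounds for $v_r$ in $H^1_{\rm loc}(|y|^a\,dx\,dy)$, and the De~Giorgi--Nash--Moser theory of \cite{FKS} provides $C^{0,\alpha}_{\rm loc}$ estimates. Along a subsequence, $v_r \to v_0$ locally uniformly, with $v_0(\cdot,0) \geq 0$ on $\{y=0\}$ and $L_a v_0 = 0$ off $\{v_0 = 0\}\cap\{y=0\}$.

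\textbf{Homogeneity and classification.} Passing to the limit in the frequency identity forces $\mathcal{N}(R,v_0) \equiv m$, and the standard argument in the unperturbed case $L_a v_0 = 0$ then gives $(x,y)\cdot\nabla v_0 = m\,v_0$, so $v_0$ is $m$-homogeneous. For $m = 1+s$, the classification of $(1+s)$-homogeneous global solutions of the obstacle problem for $L_a$ in \cite[Section~5]{CSS} yields $v_0(x,0) = c((x-x_0)\cdot\nu)_+^{1+s}$ for some $\nu \in \mathbb{S}^{n-1}$, with $c > 0$ following from $\mathcal{H}(1,v_0) = 1$. For $m=2$, the $2$-homogeneity and even-in-$y$ symmetry of $v_0$ yield $\lim_{y\to 0^+}y^a\partial_y v_0(x,y) \equiv 0$ on $\{y=0\}$, so the complementarity condition is satisfied trivially and $L_a v_0 \equiv 0$ on all of $\mathbb{R}^{n+1}$. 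A spherical-harmonics analysis for $L_a$ then forces $v_0(x,y) = p(x) - \tfrac{\Delta p(x)}{2(1+a)}\,y^2$ with $p$ a $2$-homogeneous polynomial in $x$, so $v_0(x,0) = p(x)$ is a polynomial of degree~$2$; the constraint $p \geq 0$ makes $p$ a nonnegative quadratic form.

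\textbf{Singularity and the main technical obstacle.} In the case $m=2$, non-degeneracy (Corollary~\ref{non-degeneracy-v}) forces $p \not\equiv 0$, so $\{p=0\}$ is a proper linear subspace of $\mathbb{R}^n$ of zero $n$-dimensional Lebesgue measure in every ball; by uniform convergence $v_r(\cdot,0) \to p$ on compacta together with the scale invariance of density, this yields \eqref{singular}, so $x_0 \in \Gamma_2(u)$. The hardest step is the $m=2$ Liouville-type classification: establishing that every $2$-homogeneous $L_a$-harmonic function on $\mathbb{R}^{n+1}$ which is even in $y$ has the polynomial form above. For $a=0$ (as in \cite{GP}) this is immediate from the theory of harmonic polynomials; for general $a \in (-1,1)$ it requires a careful spherical-mode decomposition adapted to the weighted operator $L_a$.
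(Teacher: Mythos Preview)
Your overall architecture (compactness $\to$ constant frequency $\to$ homogeneity $\to$ classification) is the right one and matches the paper. The compactness step is a bit rougher than the paper's---you rely on the De~Giorgi--Nash--Moser theory of \cite{FKS} for $C^{0,\alpha}$ bounds, whereas the paper invokes the optimal $C^{1,s}$ regularity of \cite{CSS} together with \cite[Lemma~4.4]{CSS} to get strong $H^1$ and $C^{1,\alpha}_{\rm loc}$ convergence, which is what one actually needs to pass to the limit in $\mathcal N$---but this is repairable.

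The genuine gap is in your $m=2$ classification. You assert that ``the $2$-homogeneity and even-in-$y$ symmetry of $v_0$ yield $\lim_{y\to 0^+}y^a\partial_y v_0(x,y)\equiv 0$'', and from this conclude $L_a v_0\equiv 0$. This implication is not valid. Even symmetry only tells you that $\partial_y v_0$ is \emph{odd} in $y$; it does not force the one-sided weighted limit $y^a\partial_y v_0(x,0^+)$ to vanish. Indeed, the $(1+s)$-homogeneous profile in case~(a) is also even in $y$, yet its conormal derivative is a nontrivial nonpositive measure supported on the contact set. Two-homogeneity only tells you that $g(x):=\lim_{y\to 0^+}y^a\partial_y v_0(x,y)$ (viewed as a measure) is $(1+a)$-homogeneous and supported on the cone $\{v_0(\cdot,0)=0\}$; nothing in your argument rules out such a measure. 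You have also misplaced the difficulty: once $L_a v_0\equiv 0$ is known, the fact that $v_0$ is a degree-$2$ polynomial is immediate from \cite[Lemma~2.7]{CSS}, so the ``spherical-harmonics analysis for $L_a$'' you flag as the hard step is in fact the easy one.

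The paper handles this point by a different, sharper argument. One tests the measure $\mu:=L_a v_0\geq 0$ (supported on $\{y=0\}$) against $\Psi\,P$, where $P(x,y)=|x|^2-\frac{n}{1+a}\,y^2$ is $L_a$-harmonic and $2$-homogeneous, and $\Psi\geq 0$ is smooth and \emph{radial}. The key identity is that for two $2$-homogeneous functions and a radial $\Psi$ one has $v_0\,\nabla\Psi\cdot\nabla P = P\,\nabla\Psi\cdot\nabla v_0$ pointwise; integrating by parts and using $L_aP=0$ then gives
\[
0\ \leq\ \langle\mu,\ \Psi\,|x|^2\rangle\ =\ \langle L_a v_0,\ \Psi P\rangle\ =\ 0,
\]
so $\mu$ is supported at the origin, i.e.\ $\mu=c\,\delta_0$. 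But $\mu$ must be $0$-homogeneous (as the second-order operator applied to a $2$-homogeneous function), forcing $c=0$ and hence $L_a v_0\equiv 0$. This is the step you are missing, and it is the crux of the $m=2$ case.
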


\begin{proof}
We follow the proof of \cite[Lemma 6.2]{CSS} (see also \cite{GP}).

First, without loss of generality we can assume that $x_0=0\in\Gamma(u)$. Since $\mathcal{H}(r,v)\geq r^{n+a+4+2\gamma}$ for $r$ small
(this follows by \eqref{ineq-H2} with $\varepsilon=2\gamma$)
and the frequency function $\Phi(r, v)=\Phi^{x_0}(r, v)$ is monotone nondecreasing (by Proposition \ref{Almgren}),
using \eqref{i} we get
$$r\,\frac{\int_{\BB}|y|^{a}|\nabla v|^{2}-\int_{\BB}v\, L_av}{\mathcal{H}(r,v)}\leq \Phi(r,v)\leq \Phi(r_0,v)\leq C,\qquad 0<r<r_{\varepsilon,x_0},$$
and
\eqref{vLv2} yields
$$r\,\frac{\int_{\BB}|y|^{a}|\nabla v|^{2}}{\mathcal{H}(r,v)}\leq C,\qquad 0<r\ll 1.$$
Taking into account the definition of $v_r$ the previous inequality is equivalent to
$$\int_{\BBone}{|y|^{a}|\nabla v_r|^{2}}\leq C,\qquad 0<r\ll 1.$$
Also, it follows by the definition of $d_r$ that $\|v_r\|_{L^{2}(\partial \BBone,|y|^a)}=1$.

This implies that the sequence $\{v_r\}$ is uniformly bounded in the Hilbert space $H^{1}(\BBone,|y|^{a})$,
so, up to a subsequence that we will still denote by $\{v_r\}$,  there exists a function $v_0\in H^{1}(\BBone,|y|^{a})$ such that
\begin{eqnarray}
\displaystyle v_{r}&\rightharpoonup& v_0 \qquad
\mbox{ weakly in } H^{1}(\BBone,|y|^{a}), \nonumber\\
\displaystyle v_{r}&\to& v_0\qquad
\mbox{ strong in } L^{2}(\partial \BBone,|y|^a), \label{estrella}\\
\displaystyle v_r&\to& v_0\qquad \mbox{ a.e. in
} \BBone.\nonumber
\end{eqnarray}
Moreover, it follows from the optimal regularity for the fractional obstacle problem proved in \cite{CSS} that
$\|v_r\|_{C^{1,s}_{\rm loc}(\BBone)}\leq C$.
Hence, applying \cite[Lemma 4.4]{CSS} we conclude that, up to a subsequence,
\begin{equation}\label{conv_fuerte}
\displaystyle v_{r}\to v_0 \qquad
\mbox{ strongly in } H^{1}(\BBone,|y|^{a}) \mbox { and in } C^{1,\alpha}_{\rm loc}(\BBone) \text{ for all $\alpha<s$.}
\end{equation}
We note here that, thanks to \eqref{ext2} and \eqref{ineq-H2}, we have
\begin{eqnarray*}
|L_a v_r(x,y)|& \leq&
C\, \frac{r^2}{d_r}\,r^{1+\gamma}\,|y|^a|x|^{1+\gamma}\\
& \leq& C\,r^{3+\gamma-m-\varepsilon/2}\,|y|^a|x|^{1+\gamma}
\qquad \text{outside $\{v_r=0\}\cap \{y=0\}$}
\end{eqnarray*}
for $r$ sufficiently small.
So, since $m\leq 2$, choosing $\varepsilon \leq \gamma$ and letting $r \to 0^+$ we get
\begin{equation}\label{sol_global}
\left\{ \begin{array}{rcll}
v_0(x,0)&\geq&0  &\textrm{in}\ {\mathbb{R}^{n}},\\
L_a v_0(x,y)&=& 0&\textrm{in}\  {\mathbb{R}^{n+1}}\setminus\{(x,0):\, v_0(x,0)=0\},\\
L_a v_0(x,y)&\geq& 0&\textrm{in}\  {\mathbb{R}^{n+1}}.
\end{array}\right.
\end{equation}
In addition, since $\|v_r\|_{L^{2}(\partial \BBone,|y|^a)}=1$, \eqref{estrella} implies that
\begin{equation}\label{nonzero}
\mbox{ $\|v_0\|_{L^{2}(\partial \BBone,|y|^a)}=1, \quad$ so in particular $v_0\not \equiv 0$.}
\end{equation}
Now, thanks to \eqref{igualdad_imp2}-\eqref{conv_fuerte} we can take the limit in the frequency formula and get
$$\mathcal{N}(\rho,v_0)=\lim_{r\to 0^+}\mathcal{N}(\rho, v_r)=\lim_{r\to 0^+}\mathcal{N}(r\rho, v)=m.$$
This implies that the classical Almgren's frequency formula $\mathcal N(\cdot,v_0)$ is constant,
hence $v_0$ is a homogeneous function of degree $m$ in $\mathcal{B}_{1/2}$ (see \cite[Theorem 6.1]{CSext}).
Also, by Proposition \ref{phi-u} we know that $m=1+s$ or $m=2$.
We now distinguish between these two cases.
\smallskip

If $m=1+s$, since \eqref{condicionCSS} is satisfied, \cite[Lemma 6.2 and Proposition 5.5]{CSS} imply that $v_0(x)=c(x\cdot\nu)_{+}^{1+s}$, for some $\nu\in \mathbb{S}^{n-1}$.
\smallskip

If $m=2$ we now show that  $0=x_0$ is a singular point and that $v_0$ is a homogeneous polynomial of degree 2.
For this we suitably modify an argument used in \cite[Theorem 1.3.2]{GP} for the Signorini problem.

Let us consider $\lambda:=\frac{n}{1+a}$ so that $P(x,y):=|x|^{2}-\lambda |y|^{2}$ satisfies $L_a P(x,y)=0$ in $\mathbb{R}^{n+1}$. By \eqref{sol_global} we see that
the nonnegative measure $\mu:=L_a v_0$ is supported on $\{y=0\}$.
In addition, since $v_0$ and $P$ are homogeneous of degree $2$,
it holds
$$
(x,y)\cdot\nabla P(x,y)=2\,P(x,y),\qquad (x,y)\cdot\nabla v_0(x,y)=2\,v_0(x,y) \qquad \forall\,(x,y),
$$
hence if $\Psi\in C^{\infty}_{0}(\mathbb{R}^{n+1})$ is a radial nonnegative function
we get
\begin{eqnarray*}
v_0(x,y)\,\nabla \Psi(x,y)\cdot\nabla P(x,y)&=&2\,v_0(x,y)\,P(x,y)\,\nabla\Psi(x,y)\cdot \frac{(x,y)}{|x|^2+|y|^2}\\
& =&
P(x,y)\,\nabla\Psi(x,y)\cdot\nabla v_0(x,y)\qquad\qquad \qquad \forall\,(x,y).
\end{eqnarray*}
Also, since $\mu$ is supported on $\{y=0\}$ and $P=|x|^2\geq 0$ on $\{y=0\}$ we see that
$$0\leq \langle \mu, \Psi\, |x|^2\rangle= \langle \mu, \Psi\, P\rangle.$$
Thus, integrating by parts and using that $L_aP=0$ we obtain
\begin{eqnarray}
0\leq \langle \mu, \Psi\, |x|^2\rangle&=&\langle L_av_0, \Psi\, P\rangle=\int_{\mathbb{R}^{n+1}}|y|^{a}\nabla v_0\cdot\nabla(\Psi\, P)\nonumber\\
&=&\int_{\mathbb{R}^{n+1}}|y|^{a}\left(\Psi\,\nabla P\cdot\nabla v_0+P\,\nabla v_0\cdot\nabla\Psi\right)\nonumber\\
&=&\int_{\mathbb{R}^{n+1}}{|y|^{a}\left(\Psi\,v_0\,L_aP-v_0\,\nabla\Psi\cdot\nabla P+P\,\nabla\Psi\cdot\nabla v_0\right)}=0\nonumber,
\end{eqnarray}
that is
$$0=\int_{\{y=0\}}|x|^{2}\,\Psi(x,0)\,d\mu(x).$$
Since $\Psi \geq 0$ is arbitrary it follows by the equation above that $\mu=c\,\delta_0$ for some $c\geq 0$.
However, since $\mu$ is $0$-homogeneous
(being a second order derivative of a 2-homogeneous function), the only possibility is that $\mu \equiv 0$, that is $L_a v_0=0$ in the whole $\mathbb{R}^{n+1}$.
Applying \cite[Lemma 2.7]{CSS}, we can then conclude that $v_0$ is a polynomial of degree $2$.

Being a polynomial, the set $\{v_0=0\}\cap \{y=0\}$ cannot have positive measure unless $v_0(\cdot,0)\equiv 0$, which would imply that $v_0\equiv 0$ in $\R^{n+1}$,
a contradiction to \eqref{nonzero}. This proves that the contact set $\{v_0=0\}\cap \{y=0\}$ has measure zero
for any possible blow-up $v_0$,
which combined with \eqref{conv_fuerte} implies that \eqref{singular} holds for $x_0=0$.
\end{proof}

\begin{rem}\label{rmk:split Gamma}
Let us note here that, thanks to Proposition \ref{phi-u}, we can define
\begin{equation}\label{Gamma_1+s}
\Gamma_{1+s}(u):=\left\{x_0\in\Gamma(u):\, \Phi^{x_0}(0^{+},v)=n+a+2(1+s)\right\},
\end{equation}
\begin{equation}\label{Gamma_2}
\Gamma_{2}(u):=\left\{x_0\in\Gamma(u):\, \Phi^{x_0}(0^{+},v)=n+a+4\right\},
\end{equation}
and the decomposition $\Gamma(u)=\Gamma_{1+s}(u)\cup \Gamma_{2}(u)$ holds.
Also, again by Proposition \ref{phi-u2} we see that the set $\Gamma_{1+s}(u)$ consists of regular points, and $\Gamma_2(u)$ consists of singular points.
Furthermore, because the map $x_0\mapsto \Phi^{x_0}(0^{+},v^{x_0})$ can be written as the infimum over $r$ of the continuous maps
$x_0\mapsto \Phi^{x_0}(r,v^{x_0})$, we deduce that $x_0\mapsto \Phi^{x_0}(0^{+},v^{x_0})$ is upper-semicontinuous,
hence $\Gamma_{1+s}(u)$ (resp. $\Gamma_2(u)$) is an open (resp. closed) subset of $\Gamma(u)$.
Finally, since
the contact set $\{u=\varphi\}$ is compact (by our assumption on the positivity set of $\varphi$, see \eqref{obstacle g}-\eqref{obstacle l}),
both $\Gamma(u)$ and $\Gamma_2(u)$ are compact sets.
\end{rem}

\section{Monneau-type monotonicity formula}
\label{sec5}

In the previous section we showed that free boundary points belong either to $\Gamma_{1+s}(u)$ or to $\Gamma_2(u)$.
Our goal here is to establish a Monneau-type monotonicity formula that will be later used to establish uniqueness of blow-ups for points in $\Gamma_2(u)$.
This Monneau-type monotonicity formula, stated next, extends the one established in \cite{GP} for $s=\frac12$.
Our proof essentially follows the arguments in \cite{GP}, although we slightly simplify some of the computations.

From now on we denote by $\mathfrak P_2$ the set of $2$-homogeneous quadratic polynomials $p_2(x,y)$ satisfying
\[L_ap_2=0\quad \textrm{in}\ \R^{n+1},\qquad p_2\geq0\quad \textrm{for}\ \{y=0\},\qquad p_2(x,y)=p_2(x,-y),\]
that is
\[\mathfrak{P}_2:=\left\{p_2(x,y)=\langle Ax, x\rangle-by^2\,:\, \mbox{$A\in \R^{n\times n}$ symmetric, $A \geq 0$, $A\not\equiv0$,}\,  L_ap_2=0\right\}.\]
Our Monneau-type monotonicity formula reads as follows.

\begin{prop}\label{monneau}
Let $u$ solve:
\begin{itemize}
\item[(A)] either the obstacle problem \eqref{extpb}, with $\varphi$ satisfying~\eqref{obstacle g};
\item[(B)] or the obstacle problem \eqref{pb-sign}, with $\varphi$ satisfying~\eqref{obstacle l}.
\end{itemize}
Then there exists a constant $C_M>0$ such that the following holds:

Let $x_0\in \Gamma_2(u)$, let $v=v^{x_0}$ be defined as in \eqref{v}, and let $p_2\in \mathfrak{P}_2$.
Also, let $\gamma>0$  be as in \eqref{obstacle g}-\eqref{obstacle l}.
Then  the quantity
\[\mathcal{M}^{x_0}(r,v, p_2):=\frac{1}{r^{n+a+4}}\int_{\partial \BB(x_0,0)}|y|^{a}\bigl(v(x,y)-p_2(x-x_0,y)\bigr)^2\]
satisfies
\[\frac{d}{dr}\mathcal{M}^{x_0}(r,v, p_2)\geq -C_M\,r^{\gamma-1}\qquad \forall\,r \in (0,r_0),\]
where $r_0$ is as in Proposition \ref{Almgren}.
\end{prop}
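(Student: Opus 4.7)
The plan is to combine a Weiss-type monotonicity for the auxiliary function $w:=v-p_2$ with careful sign tracking on the contact set $\{y=0,u=\varphi\}$. After translating so that $x_0=0$, the identity $L_aw=L_av$ (since $L_ap_2=0$) lets us handle the source through the estimate~\eqref{ext2}. A direct differentiation, using the formula for $\mathcal{H}'(r,w)$ as in~\eqref{h'}, the relation $rw_\nu=(x,y)\cdot\nabla w$ on $\partial\BB$, and integration by parts as in~\eqref{i}, yields
\[
\frac{d}{dr}\mathcal{M}^{x_0}(r,v,p_2)=\frac{2}{r}\,W(r,w)-\frac{2}{r^{n+a+4}}\int_{\BB}w\,L_aw,\qquad W(r,w):=\frac{\mathcal{D}(r,w)}{r^{n+a+3}}-\frac{2\mathcal{H}(r,w)}{r^{n+a+4}}.
\]
Thus it suffices to bound both terms from below by $-Cr^{\gamma-1}$, which will follow from $\bigl|\int_{\BB} wL_aw\bigr|\leq Cr^{n+a+4+\gamma}$ and $W(r,w)\geq -Cr^{1+\gamma}$.

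For the first bound I decompose $L_aw=L_a\widetilde u+|y|^ag$ with $|g|\leq C|x-x_0|^{1+\gamma}$ via~\eqref{ext2}. On the contact set $v(\cdot,0)=0$ and $p_2(\cdot,0)\geq0$, so $w=-p_2\leq 0$; since $L_a\widetilde u\geq 0$ is supported there, $wL_a\widetilde u\leq 0$, which contributes the correct sign to $-\int wL_aw$. The bulk part is estimated using $\|v\|_{L^\infty(\BB(x_0,0))}\leq Cr^2$, which follows from the bound $\mathcal{H}(r,v)\leq Cr^{n+a+4}$ of Proposition~\ref{phi-u} together with Lemma~\ref{fabes-kenig} applied to $v^{\pm}$; hence $\|w\|_{L^\infty(\BB)}\leq Cr^2$ and $\bigl|\int w\cdot |y|^ag\bigr|\leq Cr^2\cdot r^{1+\gamma}\cdot r^{n+1+a}$ as needed.

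For $W$, differentiating once more and simplifying via $\int_{\partial \BB}|y|^aww_\nu=\mathcal{D}(r,w)-\int_{\BB}wL_aw$ produces
\[
W'(r,w)=\frac{2}{r^{n+a+4}}\left[\,r\int_{\partial\BB}|y|^a\Bigl(w_\nu-\tfrac{2w}{r}\Bigr)^{2}+\int_{\BB}\bigl((x,y)\cdot\nabla w-2w\bigr)L_aw\,\right].
\]
The first integrand is nonnegative. The key cancellation for the second occurs on the contact set: $\nabla_xv(\cdot,0)=0$ (as $v(\cdot,0)\geq0$ attains its minimum there) and Euler's identity $x\cdot\nabla_xp_2(x,0)=2p_2(x,0)$ combine to give $(x,0)\cdot\nabla w-2w=-x\cdot\nabla_xp_2+2p_2=0$, so the singular part of $L_aw$ contributes zero. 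The absolutely continuous part is controlled via $\|w\|_{L^\infty}\leq Cr^2$ and the rescaled bound $\|\nabla w\|_{L^\infty(\BB(x_0,0))}\leq Cr$, obtained from the $C^{1,\alpha}$ estimates of~\cite{CSS} applied to $r^{-2}v(r\cdot,r\cdot)$; altogether $W'(r,w)\geq -Cr^\gamma$.

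To fix the integration constant, I rescale: $W(r,w)=\mathcal{D}(1,\widetilde w_r)-2\mathcal{H}(1,\widetilde w_r)$ where $\widetilde w_r(X,Y):=r^{-2}v(rX,rY)-p_2(X,Y)$. Using that $d_r/r^2$ is bounded (Proposition~\ref{phi-u}) and the strong $H^1(\BBone,|y|^a)$ convergence of Proposition~\ref{phi-u2} along a subsequence, $\widetilde w_{r_k}\to\widetilde w_0=cv_0-p_2$ for some $c\geq 0$ and $v_0\in\mathfrak{P}_2$ (or $\widetilde w_0=-p_2$ if $d_r/r^2\to0$); since $\widetilde w_0$ is a 2-homogeneous $L_a$-harmonic polynomial, Euler's identity gives $W(1,\widetilde w_0)=\int_{\BBone}\widetilde w_0L_a\widetilde w_0=0$, and the approximate monotonicity of $W$ then forces $\lim_{r\to 0^+}W(r,w)=0$. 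Therefore $W(r,w)\geq -\tfrac{C}{1+\gamma}r^{1+\gamma}$, and combining with the previous paragraph yields $\frac{d}{dr}\mathcal{M}^{x_0}(r,v,p_2)\geq -Cr^\gamma\geq -C_Mr^{\gamma-1}$. The main obstacle is the bookkeeping on the contact set, used twice to sign both the singular contribution to $W'(r,w)$ and the term $-\int wL_aw$; both rely crucially on the definition~\eqref{v} of $v$, which absorbs the first-order Taylor expansion of $\Delta\varphi$ at $x_0$ and is precisely what provides the decay $|L_av|\leq C|y|^a|x-x_0|^{1+\gamma}$ away from the contact set needed for the $r^{\gamma-1}$ exponent.
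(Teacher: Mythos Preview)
Your argument is correct, but it follows a genuinely different path from the paper's to bound the Weiss-type quantity $W(r,w)$ from below.

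The paper first observes (via the expansion $\mathcal{W}^{x_0}(r,v)-\mathcal{W}^{x_0}(r,p_2)$ and an integration by parts using $L_ap_2=0$, $z\cdot\nabla p_2=2p_2$) that $W(r,w)=\mathcal{W}^{x_0}(r,v)$, and then proves in Lemma~\ref{bound-weiss} that $\mathcal{W}^{x_0}(r,v)\geq -C\,r^{\gamma}$ \emph{directly from the Almgren formula}: since $x_0\in\Gamma_2(u)$ one has $\Phi^{x_0}(0^+,v)=n+a+4$, so by monotonicity of $\Phi$ either $\mathcal{H}(r,v)\leq r^{n+a+4+2\gamma}$ (trivial case) or $r\mathcal{I}(r,v)\geq (2-Cr)\mathcal{H}(r,v)$, which together with \eqref{vLv} and \eqref{ineq-H1} gives the bound. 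No further differentiation of $W$ and no blow-up argument are needed.

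Your route instead differentiates $W$ once more, exploits the pointwise cancellation $(x,0)\cdot\nabla w-2w=0$ on the contact set (via $\nabla_x v(\cdot,0)=0$ and Euler's identity for $p_2$) to kill the singular part of $L_aw$, bounds the absolutely continuous part by $L^\infty$ estimates on $w,\nabla w$, and then identifies the integration constant $W(0^+,w)=0$ through the blow-up compactness of Proposition~\ref{phi-u2}. This is more hands-on but actually yields the sharper inequality $W(r,w)\geq -Cr^{1+\gamma}$ (hence $\tfrac{d}{dr}\mathcal{M}\geq -Cr^{\gamma}$, stronger than the statement). Two small remarks: (i) in your opening plan you wrote the two-sided bound $\bigl|\int_{\BB}wL_aw\bigr|\leq Cr^{n+a+4+\gamma}$, but only the one-sided bound $-\int_{\BB}wL_aw\geq -Cr^{n+a+4+\gamma}$ follows from your sign argument (since $\int p_2\,L_a\widetilde u\geq 0$ is not controlled from above); fortunately only the one-sided bound is needed. (ii) The passage from the subsequential limit $W(r_k,w)\to 0$ to the full limit uses that $r\mapsto W(r,w)+Cr^{1+\gamma}$ is monotone, which you invoked correctly.
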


The rest of this Section is devoted to the proof of Proposition \ref{monneau}.
For that we will need the following lower bound on a suitable Weiss-type energy:

\begin{lem}\label{bound-weiss}
Let $v$ be as in Proposition \ref{monneau}, and let $r_0$ be as in Proposition \ref{Almgren}.
Then there exists a constant $C_W>0$ such that the following holds:

The quantity
\[\mathcal{W}^{x_0}(r,v):=\frac{1}{r^{n+a+3}}\int_{\BB(x_0,0)}|y|^{a}|\nabla v|^2-\frac{2}{r^{n+a+4}}\int_{\partial \BB(x_0,0)}|y|^{a}v^2\]
satisfies
\begin{equation}\label{ineq-weiss}
\mathcal{W}^{x_0}(r,v)\geq -C_W\,r^{\gamma}\qquad \forall\, r \in (0,r_0).
\end{equation}
\end{lem}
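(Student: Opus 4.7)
The plan is to follow the classical Weiss-type scheme: differentiate $\mathcal{W}^{x_0}(r,v)$ in $r$, isolate a nonnegative squared term plus small errors involving $L_a v$, and integrate from $0$ exploiting the blow-up characterization at singular points. First I would compute $\frac{d}{dr}\mathcal{W}^{x_0}(r,v)$. Using the derivative formulas for $\mathcal{H}(r,v)$ and $\mathcal{D}(r,v)$ already obtained in the proof of Proposition~\ref{Almgren}, together with the identity $\mathcal{D}(r,v)=\int_{\partial \BB}|y|^a v\,v_\nu-\int_{\BB}v\,L_a v$, a standard Rellich--Pohozaev rearrangement should give
\[
\frac{d}{dr}\mathcal{W}^{x_0}(r,v)=\frac{2}{r^{n+a+3}}\int_{\partial \BB}|y|^{a}\left(v_{\nu}-\frac{2v}{r}\right)^{2}+\mathcal{R}(r,v),
\]
where
\[
\mathcal{R}(r,v)=\frac{4}{r^{n+a+4}}\int_{\BB}v\,L_{a}v+\frac{2}{r^{n+a+4}}\int_{\BB}\bigl((x,y)\cdot\nabla v\bigr)L_{a}v.
\]
The first term is $\geq 0$ and drops out. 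Observe that since $v$ vanishes on the contact set $\{v=0\}\cap\{y=0\}$ together with its tangential derivatives, both $v$ and $(x,y)\cdot\nabla v$ vanish there, so the singular component of $L_a v$ contributes nothing to $\mathcal{R}(r,v)$ and the integration by parts is justified exactly as in \eqref{milveces}.

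The error $\mathcal{R}(r,v)$ can then be estimated via the Cauchy--Schwarz arguments of \eqref{milveces}--\eqref{numerador2}. Since $x_0\in\Gamma_2(u)$, Proposition~\ref{phi-u} with $m=2$ gives $\mathcal{H}(r,v)\leq \bar C\,r^{n+a+4}$; combined with the boundedness of $\mathcal{N}=r\mathcal{D}/\mathcal{H}$ coming from monotonicity of $\Phi$, this yields $\mathcal{D}(r,v)\leq C\,r^{n+a+3}$. Plugging these bounds and \eqref{ext2} into the analogues of \eqref{milveces}--\eqref{numerador2} produces $|\mathcal{R}(r,v)|\leq C\,r^{\gamma}$, hence $\frac{d}{dr}\mathcal{W}^{x_0}(r,v)\geq -C\,r^{\gamma}$.

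To conclude I would identify $\mathcal{W}^{x_0}(0^+,v)=0$. Writing
\[
\mathcal{W}^{x_0}(r,v)=\bigl(\mathcal{N}(r,v)-2\bigr)\,\frac{\mathcal{H}(r,v)}{r^{n+a+4}},
\]
the boundedness of $\mathcal{H}(r,v)/r^{n+a+4}$ combined with $\mathcal{N}(r,v)\to 2$ at singular points (Remark~\ref{rmk:vLv} and \eqref{m_}) yields the claim. Equivalently, every subsequential blow-up $v_0$ from Proposition~\ref{phi-u2} is a $2$-homogeneous $L_a$-harmonic polynomial, for which a direct integration by parts shows $\mathcal{W}(\cdot,v_0)\equiv 0$; then the scaling identity $\mathcal{W}^{x_0}(r,v)=\mathcal{W}(1,v_r)\cdot \mathcal{H}(r,v)/r^{n+a+4}$ together with the strong $H^1(|y|^a)$-convergence $v_r\to v_0$ gives $\mathcal{W}^{x_0}(r,v)\to 0$. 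Integrating the derivative bound on $(0,r)$ then yields $\mathcal{W}^{x_0}(r,v)\geq -\frac{C}{\gamma+1}\,r^{\gamma+1}\geq -C_W\,r^{\gamma}$ for $r\in(0,r_0)$, after shrinking $r_0$ so that $r_0\leq 1$.

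The main obstacle I foresee is the first step: carrying out the algebraic rearrangement that produces the perfect-square term, and simultaneously justifying the integration by parts despite $L_a v$ carrying a singular measure component on the contact set. Once one observes that both $v$ and $(x,y)\cdot\nabla v$ vanish there, this is routine, and the error analysis thereafter reduces to Cauchy--Schwarz bounds essentially identical to those already used for the Almgren formula, now evaluated at the optimal scaling $m=2$ available only at singular points.
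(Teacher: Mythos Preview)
Your approach is the classical ``differentiate $\mathcal W$ and integrate'' Weiss argument, which is sound but differs from what the paper does. The paper never computes $\frac{d}{dr}\mathcal W$ at all: instead it reads the bound off \emph{pointwise} from the Almgren monotonicity already proved. Since $x_0\in\Gamma_2(u)$ gives $\Phi(0^+,v)=n+a+4$, Proposition~\ref{Almgren} forces, for each $r\in(0,r_0)$, either $(r+C_0r^2)\mathcal H'/\mathcal H\ge n+a+4$ or $\mathcal H\le r^{n+a+4+2\gamma}$. In the first case a short algebraic manipulation of \eqref{h'}--\eqref{i} together with the bounds \eqref{vLv} and \eqref{ineq-H1} gives $r\mathcal D-2\mathcal H\ge -Cr^{n+a+4+\gamma}$; in the second case one simply drops $\mathcal D\ge0$ and uses $\mathcal H\le r^{n+a+4+2\gamma}$. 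This is shorter and avoids both the Rellich rearrangement and the identification of $\mathcal W(0^+,v)$.

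Your route works too, but there is a gap as written. You invoke ``boundedness of $\mathcal N=r\mathcal D/\mathcal H$ coming from monotonicity of $\Phi$'' to get $\mathcal D(r,v)\le Cr^{n+a+3}$. However, $\Phi$ is built with $\max\{\mathcal H,r^{n+a+4+2\gamma}\}$, so it only controls $\mathcal N$ in the regime $\mathcal H(r,v)>r^{n+a+4+2\gamma}$; in the complementary regime the argument gives no information on $\mathcal D$. This is precisely why the paper splits into two cases. A simple fix for your scheme is to use the crude bound $\mathcal D(r,v)\le Cr^{n+1+a}$ (from $|\nabla v|\le C$, valid uniformly on the compact set $\Gamma_2(u)$ by the $C^{1,s}$ regularity): then \eqref{numerador1} gives $|\mathcal R(r,v)|\le Cr^{\gamma-1}$, which is still integrable and yields $\mathcal W(r,v)\ge -C_W r^{\gamma}$ after integration --- exactly the paper's bound, though you lose the extra factor of $r$ you were claiming. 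Also note a harmless sign slip: the coefficient of $\int_{\BB}v\,L_av$ in $\mathcal R$ is $-4$, not $+4$, once one substitutes $\mathcal D=\mathcal I+\int_{\BB}v\,L_av$.
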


\begin{proof}
We will use the Almgren-type monotonicity formula obtained in the previous sections.
Since there is not possible confusion along this proof, we will use the notation $\Phi=\Phi^{x_0}$, $\mathcal{H}=\mathcal{H}^{x_0}$, $\mathcal{I}=\mathcal{I}^{x_0}$, and $\mathcal{D}=\mathcal{D}^{x_0}$.

First, by definition of $\Gamma_2(u)$ (see \eqref{Gamma_2}) we see that $\Phi(0^{+},v)=n+a+4$.
Thus, by the monotonicity of $\Phi(\cdot,v)$ on $(0,r_0)$ (see Proposition \ref{Almgren}), for any $r \in (0,r_0)$ we have that either
\begin{equation}\label{1}
\Phi(r,v)=\bigl(r+C_0r^2\bigr)\,\frac{\mathcal{H}'(r,v)}{\mathcal{H}(r,v)}\geq n+a+4
\end{equation}
or
\begin{equation}\label{2}
\mathcal{H}(r,v)\leq r^{n+a+4+2\gamma}.
\end{equation}
We split the proof of \eqref{ineq-weiss} in two cases.

\vspace{2mm}

\noindent
\emph{- Case 1}. If \eqref{1} holds then it follows by \eqref{h'} that
$$
(r+C_0r^2)\,\left(\frac{n+a}{r}+2\,\frac{{\mathcal{I}(r,v)}}{\mathcal{H}(r,v)}\right)\geq n+a+4,
$$
that is
$$
n+a+2\,r\,\frac{{\mathcal{I}(r,v)}}{\mathcal{H}(r,v)}\geq
 n+a+4- C_0r^2\,\left(\frac{n+a}{r}+2\,\frac{{\mathcal{I}(r,v)}}{\mathcal{H}(r,v)}\right),
$$
and since $r\left(\frac{n+a}{r}+2\,\frac{{\mathcal{I}(r,v)}}{\mathcal{H}(r,v)}\right)\leq \Phi(r,v)\leq C$ we get
$$
r\,\frac{{\mathcal{I}(r,v)}}{\mathcal{H}(r,v)}\geq 2-\frac{C_0}{2}\,r^2\left(\frac{n+a}{r}+2\frac{{\mathcal{I}(r,v)}}{\mathcal{H}(r,v)}\right)\geq 2-C\,r.
$$
Hence, since $m=2$, recalling \eqref{i}, \eqref{vLv}, and \eqref{ineq-H1} we obtain
$$C\,r^{n+a+5+\gamma}+\bigl(r \,\mathcal{D}(r,v)-2\,\mathcal{H}(r,v)\bigr)\geq -C\,r\,\mathcal{H}(r,v) \geq -C\,r^{n+a+5},$$
which gives that
$\mathcal{W}^{x_0}(r,v)\geq -C\,r \geq -C\,r^{\gamma}$ (as $\gamma \leq 1$), as desired.

\vspace{2mm}

\noindent
\emph{- Case 2}. If \eqref{2} holds then we simply use that $\mathcal{D}(r,v)\geq0$ to obtain
\[\frac{1}{r^{n+a+3}}{\mathcal{D}(r,v)}-\frac{2}{r^{n+a+4}}\mathcal{H}(r,v)\geq -C\,r^{2\gamma}\geq -C\,r^{\gamma},\]
which concludes the proof of \eqref{ineq-weiss}.
\end{proof}

We can now prove Proposition \ref{monneau}.

\begin{proof}[Proof of Proposition \ref{monneau}]
Without loss of generality we can assume $x_0=0$. Set $w:=v-p_2$ and let us use the notation $z=(x,y) \in \R^{n+1}$.
Then
\begin{eqnarray}
\frac{d}{dr}\mathcal{M}^{x_0}(r,v, p_2)&=&\frac{d}{dr}\int_{\partial \BBone}\frac{|y|^{a}|w(rz)|^2}{r^4}\nonumber\\
&=&\int_{\partial \BBone}|y|^{a}\,\frac{2\,w(rz)\bigl(rz\cdot\nabla w(rz)-2\,w(rz)\bigr)}{r^5}\nonumber\\
&=&\frac{2}{r^{n+a+5}}\int_{\partial \BB}|y|^{a}w\bigl(z\cdot\nabla w-2\,w\bigr).\label{d dr M}
\end{eqnarray}
We now claim that
\begin{equation}\label{claim-weiss}
\mathcal{W}^{x_0}(r,v)\leq \frac{1}{r^{n+a+4}}\int_{\partial \BB}|y|^{a}w(z\cdot \nabla w-2w)+C\,r^{1+\gamma}.
\end{equation}
Indeed, since $L_ap_2=0$ in $\R^{n+1}$ and $p_2$ is $2$-homogeneous,
it is easy to check that $\mathcal{W}^{x_0}(r,p_2)\equiv0$. Hence, using again that $L_ap_2=0$
and that $z\cdot \nabla p_2=2\,p_2$ (by the $2$-homogeneity), integrating by parts we get
\begin{eqnarray}
\mathcal{W}^{x_0}(r,v)&=&\mathcal{W}^{x_0}(r,v)-\mathcal{W}^{x_0}(r,p_2)\\
&=&\frac{1}{r^{n+a+3}}\int_{\BB}|y|^{a}\left(|\nabla w|^2+2\,\nabla w\cdot\nabla p_2\right)-\frac{2}{r^{n+a+4}}\int_{\partial \BB}|y|^{a}\left(w^2+2\,w\,p_2\right)\nonumber\\
&=&\frac{1}{r^{n+a+3}}\int_{\BB}|y|^{a}|\nabla w|^2+\frac{1}{r^{n+a+3}}\int_{\BB}2\,w\,L_ap_2\nonumber\\
&& +\frac{2}{r^{n+a+4}}\int_{\partial \BB}|y|^{a}w\left(z\cdot \nabla p_2-2\,p_2\right)- \frac{2}{r^{n+a+4}}\int_{\partial \BB}|y|^{a}w^2\nonumber\\
&=&\frac{1}{r^{n+a+3}}\int_{\BB}|y|^{a}|\nabla w|^2-\frac{2}{r^{n+a+4}}\int_{\partial \BB}|y|^{a}w^2\label{split}.
\end{eqnarray}
Using now that $p_2\leq C\,r^2$ in $\BB$ and arguing as we did in Remark \ref{rmk:vLv} to obtain \eqref{vLv}, we get
$$
\left|\int_{\BB}{w\, L_a w}\right|=\left|\int_{\BB}{(p_2-v)\, L_a v}\right|\leq C\, r^{n+a+4+\gamma},$$
where for the first equality we used again that $L_ap_2=0$. Integrating by parts in \eqref{split} and using the previous bound, we conclude that
\begin{eqnarray*}
\mathcal{W}^{x_0}(r,v)&=&\frac{1}{r^{n+a+3}}\int_{\BB}w\,L_aw+\frac{1}{r^{n+a+4}}\int_{\partial \BB}|y|^{a}w\,(z\cdot \nabla w-2\,w)\\
&\leq& \frac{1}{r^{n+a+4}}\int_{\partial \BB}|y|^{a}w(z\cdot \nabla w-2w)+ C\,r^{1+\gamma},
\end{eqnarray*}
and \eqref{claim-weiss} follows.

Finally, combining \eqref{d dr M} and \eqref{claim-weiss} and using Lemma \ref{bound-weiss} we get
$$\frac{d}{dr}\mathcal{M}^{x_0}(r,v,p_2) \geq \frac{2}{r}\,\mathcal{W}^{x_0}(r,v)-C\,r^{1+\gamma}\geq -C\,r^{\gamma-1}$$
and the proposition is proved.
\end{proof}

\section{Uniqueness of blow-ups and proof of Theorem \ref{thm1}}
\label{sec6}

We saw in the previous sections that blow-ups are homogeneous of order $m$, and either $m=1+s$ or $m=2$.
When the blow-up at $x_0$ is of order $m=1+s$, it follows by \cite[Theorem 7.7]{CSS} that $x_0$ is a regular point and that the free boundary is
a $C^{1+\alpha}$ $(n-1)$-dimensional surfaces in a neighborhood of $x_0$.
When the blow-up at $x_0$ is of order $m=2$, then $x_0$ belongs to the set of singular points, but we still have not proved anything about the regularity of this set.

By Proposition \ref{phi-u2} we know that all blow-ups of the function $v^{x_0}$ at $x_0\in\Gamma_2(u)$ are homogeneous polynomials of order 2.
However, it may happen that one gets different polynomials over different subsequences.
We prove in this section that this does not happen, i.e., we show uniqueness of the blow-up.
Moreover, we also prove continuity of the blow-up profiles with respect to $x_0$.
This will yield the regularity of the set $\Gamma_2(u)$, and thus to our main result Theorem \ref{thm1}.

We start with the following.

\begin{lem}\label{est-2}
Let $u$ solve:
\begin{itemize}
\item[(A)] either the obstacle problem \eqref{extpb}, with $\varphi$ satisfying~\eqref{obstacle g};
\item[(B)] or the obstacle problem \eqref{pb-sign}, with $\varphi$ satisfying~\eqref{obstacle l}.
\end{itemize}
Let $r_1$ be as in Lemma \ref{nondegeneracy0} or Lemma \ref{nondegeneracy},
let $r_0$ be as in Proposition \ref{Almgren}, and set $\hat r:=\min\{r_0/2,r_1\}$.
Then there exist constants $C_+,c_->0$ such that the following holds:

Let $x_0\in \Gamma_2(u)$, and
let $v=v^{x_0}$ be defined as in \eqref{v}. Then
\[c_-\,r^2\leq \sup_{\BB(x_0,0)}|v^{}|\leq C_+r^2\qquad \forall\,r \in (0,\hat r).\]
\end{lem}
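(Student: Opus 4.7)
The plan is to prove the two bounds separately. The lower bound follows essentially for free from the non-degeneracy already established. The upper bound is the interesting step, and it should come from combining the sharp growth rate of $\mathcal{H}$ at a singular point (from Proposition~\ref{phi-u} with $m=2$) with the Fabes--Kenig--Serapioni $L^2$--$L^\infty$ estimate of Lemma~\ref{fabes-kenig}.

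For the lower bound, I would simply note that since $v(x,0)=u(x)-\varphi(x)$, Corollary~\ref{non-degeneracy-v} gives
\[
\sup_{\mathcal{B}_r(x_0,0)}|v|\;\geq\;\sup_{B_r(x_0)}v(\cdot,0)\;\geq\;c_1\,r^2
\qquad\forall\,r\in(0,r_1),
\]
which is the desired estimate with $c_-=c_1$.

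For the upper bound, since $x_0\in\Gamma_2(u)$ we have $\Phi^{x_0}(0^+,v)=n+a+4$, i.e.\ $m=2$ in Proposition~\ref{phi-u}. That proposition gives the $L^2$-on-spheres estimate $\mathcal{H}^{x_0}(r,v)\leq\bar C\,r^{n+a+4}$, and integrating in $r$ yields
\[
\mathcal{G}^{x_0}(r,v)=\int_{\mathcal{B}_r(x_0,0)}|y|^{a}v^2\;\leq\;C\,r^{n+a+5}.
\]
To upgrade this to an $L^\infty$ bound, I would apply Lemma~\ref{fabes-kenig} separately to $v^+$ and $v^-$. The bound on $v^+$ is exactly what appears in the proof of Corollary~\ref{non-degeneracy-v}: from \eqref{ext2} together with the fact that $L_a v^+\leq 0$ on $\{v^+=0\}$ one gets $L_a v^+\leq C|y|^{a}|x-x_0|^{1+\gamma}$ globally, so Lemma~\ref{fabes-kenig} (applied at radius $2r$) gives
\[
\sup_{\mathcal{B}_r(x_0,0)}v^+\;\leq\;C\Bigl(r^{-(n+1+a)}\,\mathcal{G}^{x_0}(2r,v)\Bigr)^{1/2}+C\,r^{3+\gamma}\;\leq\;C_+\,r^2.
\]

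The remaining point, which I expect to be the only mildly delicate one, is the analogous estimate for $v^-$. The issue is that a priori $L_a v$ is only a nonnegative Radon measure, concentrated on the contact set $\{y=0,\,u=\varphi\}\subset\{v(\cdot,0)=0\}$. However, since $v(\cdot,0)\geq0$, the set $\{v<0\}$ is contained in $\{y\neq0\}$, where $\widetilde u$ is smooth and \eqref{ext2} gives the pointwise bound $|L_av|\leq C|y|^{a}|x-x_0|^{1+\gamma}$. Writing $v^-=(-v)^+$ and using that the distributional computation $L_a\bigl((-v)^+\bigr)\leq\mathbf{1}_{\{v<0\}}L_a(-v)$ (the singular part supported on $\{v=0\}$ has the favorable sign), one concludes $L_av^-\leq C|y|^{a}|x-x_0|^{1+\gamma}$ everywhere. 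Lemma~\ref{fabes-kenig} applied to $v^-$ then yields $\sup_{\mathcal{B}_r(x_0,0)}v^-\leq C\,r^2$ by exactly the same computation. Adding the two estimates gives the claimed $\sup_{\mathcal{B}_r(x_0,0)}|v|\leq C_+\,r^2$, with the constant independent of $x_0$ since all the ingredients (Proposition~\ref{phi-u}, Corollary~\ref{non-degeneracy-v}, and Lemma~\ref{fabes-kenig}) are uniform in $x_0\in\Gamma_2(u)$.
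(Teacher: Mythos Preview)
Your proof is correct and follows essentially the same route as the paper: the lower bound is quoted from Corollary~\ref{non-degeneracy-v}, and the upper bound combines \eqref{ineq-H1} (with $m=2$) with \eqref{p18} and Lemma~\ref{fabes-kenig} applied to $v^+$ and $v^-$. Your additional justification for the bound $L_a v^-\le C|y|^a|x-x_0|^{1+\gamma}$ is a bit more explicit than the paper's terse ``repeating the same argument with $v^-$ in place of $v^+$'', but it is exactly the intended point---the singular part of $L_av$ sits on $\{y=0\}\subset\{v^-=0\}$ and carries the favorable sign.
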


\begin{proof}
The lower bound was already established  for every $r\in (0, r_1)$ in Corollary~\ref{non-degeneracy-v}.

To prove the upper bound we may assume that $x_0=0$. Then, since $m=2$, \eqref{ineq-H1} yields
\[\mathcal{H}(r,v)=\int_{\partial\BB}|y|^a\, v^2\leq C\,r^{n+a+4}\qquad \forall\,r \in (0,r_0),\]
and it follows by \eqref{p18} and Lemma \ref{fabes-kenig} that
\[\sup_{\BB}v^+\leq C\,r^2\qquad \forall\,r \in (0,r_0/2).\]
Repeating the same argument with $v^-$ in place of $v^+$ we also get
\[\sup_{\BB}v^-\leq C\,r^2\qquad \forall\,r \in (0,r_0/2),\]
and the lemma is proved.
\end{proof}

Define
\[\mathbb{P}_2^+:=\left\{p_2(x)=\langle Ax,x\rangle\,:\, \mbox{$A\in \R^{n\times n}$ symmetric, $A \geq 0$, $A\not\equiv0$}\right\}.\]
We now prove the uniqueness and continuity of the blow-ups (compare with \cite[Theorems 2.8.3 and 2.8.4]{GP}).

\begin{prop}\label{prop-singular-set}
Let $u$ solve:
\begin{itemize}
\item[(A)] either the obstacle problem \eqref{extpb}, with $\varphi$ satisfying~\eqref{obstacle g};
\item[(B)] or the obstacle problem \eqref{pb-sign}, with $\varphi$ satisfying~\eqref{obstacle l}.
\end{itemize}
Then there exists a modulus of continuity $\omega:\R^+\to \R^+$ such that, for any $x_0\in \Gamma_2(u)$, we have
\[u(x)-\varphi(x)=p_2^{x_0}(x-x_0)+\omega\bigl(|x-x_0|\bigr)|x-x_0|^2\]
for some polynomial $p_2^{x_0}\in \mathbb{P}_2^+$.
In addition the mapping $\Gamma_2(u)\ni x_0\mapsto p_2^{x_0} \in \mathbb{P}_2^+$ is continuous, with
$$
\int_{\partial\mathcal{B}_1}|y|^a\bigl(p_2^{x_0'}-p_2^{x_0}\bigr)^2 \leq \omega(|x_0-x_0'|)\qquad \forall\,x_0,x_0' \in \Gamma_2(u).
$$
\end{prop}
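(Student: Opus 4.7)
The plan is to combine the Monneau-type monotonicity formula of Proposition~\ref{monneau} with the non-degeneracy bound of Lemma~\ref{est-2} to obtain uniqueness of the blow-up at each point of $\Gamma_2(u)$, and then to promote the resulting pointwise-in-$x_0$ convergence to a \emph{uniform} modulus by exploiting the continuity in $x_0$ of all objects together with the compactness of $\Gamma_2(u)$ noted in Remark~\ref{rmk:split Gamma}.

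Fix $x_0\in\Gamma_2(u)$ and introduce the $2$-homogeneous rescaling $\tilde v_r(z,w):=r^{-2}\,v^{x_0}(x_0+rz,\,rw)$. A direct change of variables yields, for every $p_2\in\mathfrak P_2$,
\[
\mathcal M^{x_0}(r,v^{x_0},p_2)=\int_{\partial\mathcal B_1}|w|^{a}\bigl(\tilde v_r(z,w)-p_2(z,w)\bigr)^2\,d\mathcal H^{n}(z,w),
\]
so Proposition~\ref{monneau} implies that $r\mapsto\mathcal M^{x_0}(r,v^{x_0},p_2)+\tfrac{C_M}{\gamma}r^{\gamma}$ is nondecreasing on $(0,r_0)$; in particular $\lim_{r\downarrow 0}\mathcal M^{x_0}(r,v^{x_0},p_2)$ exists. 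Lemma~\ref{est-2} shows that $\{\tilde v_r\}$ is uniformly bounded in $L^\infty(\mathcal B_1)$, and that the ratio $d_r/r^2$ is bounded and bounded away from zero, so Proposition~\ref{phi-u2} forces every subsequential limit $\bar p$ of $\tilde v_r$ to belong to $\mathfrak P_2$. If $\bar p$ and $\bar q$ are two such limits, testing the above identity with $p_2=\bar p$ along the two subsequences gives $0$ in one case and $\int_{\partial\mathcal B_1}|w|^{a}(\bar q-\bar p)^2$ in the other; since the limit exists these coincide, forcing $\bar p=\bar q$. This defines the unique blow-up $\bar p^{x_0}\in\mathfrak P_2$, and we set $p_2^{x_0}(x):=\bar p^{x_0}(x,0)\in\mathbb P_2^+$.

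To pass from $L^2$-on-spheres closeness to the pointwise expansion, set $w_r:=\tilde v_r-\bar p^{x_0}$. Since $L_a\bar p^{x_0}=0$, the bound \eqref{ext2} gives $|L_a w_r|\leq C\,r^{1+\gamma}|w|^{a}|z|^{1+\gamma}$ off $\{(z,0):\tilde v_r(z,0)=0\}$; on that exceptional set $w_r=-\bar p^{x_0}(\cdot,0)\leq 0$, hence $L_a(w_r)^{+}\leq C\,r^{1+\gamma}|w|^{a}$ in $\mathcal B_1$ (and symmetrically for $(w_r)^{-}$). Lemma~\ref{fabes-kenig} then yields
\[
\sup_{\mathcal B_{1/2}}|w_r|\leq C\Bigl(\int_{\mathcal B_1}|w|^{a}w_r^{2}\Bigr)^{1/2}+C\,r^{1+\gamma}.
\]
Integrating the identity $\mathcal M^{x_0}(\rho,v^{x_0},\bar p^{x_0})=\int_{\partial\mathcal B_1}|w|^{a}(\tilde v_\rho-\bar p^{x_0})^2$ in $\rho\in(r,2r)$ and using the previous monotonicity shows that $\int_{\mathcal B_1}|w|^{a}w_r^{2}\to 0$, hence $\sup_{\mathcal B_{1/2}}|w_r|\to 0$; choosing $r=2|x-x_0|$ and rescaling back gives $u(x)-\varphi(x)=p_2^{x_0}(x-x_0)+\omega_{x_0}(|x-x_0|)|x-x_0|^{2}$ with $\omega_{x_0}(t)\to 0$ as $t\downarrow 0$.

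For continuity and the uniform modulus: $x_0\mapsto v^{x_0}$ is continuous (as noted after \eqref{v}), so $x_0\mapsto\mathcal M^{x_0}(r,v^{x_0},p_2)$ is continuous for each fixed $r$ and $p_2$. Together with the uniqueness of blow-ups and the uniform coefficient bound on $\bar p^{x_0}$ coming from Lemma~\ref{est-2}, a standard subsequence argument shows that $x_0\mapsto\bar p^{x_0}$ is continuous on the compact set $\Gamma_2(u)$. The family $\{r\mapsto\mathcal M^{x_0}(r,v^{x_0},\bar p^{x_0})+\tfrac{C_M}{\gamma}r^{\gamma}\}_{x_0\in\Gamma_2(u)}$ is then continuous in $x_0$, monotone nondecreasing in $r$, and converges pointwise to $\tfrac{C_M}{\gamma}r^\gamma$ as one would take $r\downarrow 0$; a Dini-type argument on the compact set $\Gamma_2(u)$ produces a single modulus $\omega(r)\to 0$ with $\mathcal M^{x_0}(r,v^{x_0},\bar p^{x_0})\leq\omega(r)$ uniformly in $x_0$, giving a uniform modulus in the pointwise expansion above. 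Finally, for $x_0,x_0'\in\Gamma_2(u)$ the triangle inequality
\[
\int_{\partial\mathcal B_1}|w|^{a}(\bar p^{x_0'}-\bar p^{x_0})^{2}\leq 2\,\mathcal M^{x_0}(r,v^{x_0},\bar p^{x_0})+2\,\mathcal M^{x_0}(r,v^{x_0},\bar p^{x_0'}),
\]
combined with the uniform decay of the first term and continuity of $\mathcal M^{x_0}(r,v^{x_0},p_2)$ in $(x_0,p_2)$ at any fixed small $r$, yields the claimed modulus for $x_0\mapsto p_2^{x_0}$.

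The main obstacle I expect is precisely this last step: the $r^{\gamma}$ correction breaks the \emph{strict} monotonicity of $\mathcal M^{x_0}$, so a naive Dini argument does not apply, and the continuity of $x_0\mapsto\bar p^{x_0}$ that makes Dini work depends on the uniqueness of blow-ups, creating a mild circularity that must be broken by testing the Monneau formula at one base point with the blow-up polynomial of another, leveraging the compactness of $\Gamma_2(u)$ to obtain uniform coefficient bounds on $\bar p^{x_0}$.
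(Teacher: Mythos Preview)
Your proposal is correct and matches the paper's approach: rescale by $r^{-2}$, use Lemma~\ref{est-2} and Proposition~\ref{phi-u2} to obtain subsequential polynomial limits in $\mathfrak P_2$, invoke Proposition~\ref{monneau} for uniqueness, and then---exactly as you anticipate in your final paragraph---test $\mathcal M^{x_0'}(\cdot,v^{x_0'},p_2^{x_0})$ with the polynomial of $x_0$ and let $r\downarrow 0$ to get continuity. The only cosmetic differences are that the paper reads off the pointwise expansion from the $C^{1,\alpha}_{\rm loc}$ convergence (obtained as in the proof of Proposition~\ref{phi-u2}) rather than via Lemma~\ref{fabes-kenig}, and it produces the uniform modulus by a direct compactness argument rather than a Dini-type one.
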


\begin{proof}
Let $v^{x_0}$ be given by \eqref{v}, and define
\[{v_r^{x_0}(x,y)=\frac{v^{x_0}\bigl(x_0+rx,\,ry\bigr)}{r^2}}.\]
By Lemma \ref{est-2} we have that
\[c_-\rho^2\leq \sup_{\mathcal B_\rho} |v_r^{x_0}|\leq C_+\rho^2,\]
for all $\rho\in (0,\hat r/r)$. Thus, arguing as in Proposition \ref{phi-u2} we get
\begin{equation}\label{aggain}
\mbox{$v_{r_j}^{x_0}\longrightarrow v_0^{x_0}$ in $C^{1,\alpha}_{\rm loc}(\R^{n+1})$ along a subsequence $r_j\downarrow0$,}
\end{equation}
and $v_0^{x_0}$ is not identically zero.
Now, since $\mathcal{N}(0^+,v^{x_0})=2$, it follows that
\[\mathcal{N}(r,v_0^{x_0})=\lim_{r_j\downarrow0}\mathcal{N}(r,v_{r_j}^{x_0})=\lim_{r_j\downarrow0}\mathcal{N}(rr_j,v^{x_0})=\mathcal{N}(0^+,v^{x_0})=2,\]
and so  \cite[Theorem 6.1]{CSext} implies that $v_0^{x_0}$ is homogeneous of degree $2$.
Exactly as in the proof of Proposition \ref{phi-u2} we deduce that $v_0^{x_0}=p_2^{x_0}$ for some polynomial $p_2^{x_0}\in\mathfrak{P}_2$.
Hence, using again \eqref{aggain} we get
\[\mathcal{M}(0^+,v^{x_0},p_2^{x_0})=\lim_{r_j\downarrow0}\int_{\partial\mathcal{B}_1}|y|^a\bigl(v_{r_j}^{x_0}-p_2^{x_0}\bigr)^2=0,\]
and Proposition \ref{monneau} implies that
\begin{equation}\label{Cc}
\int_{\partial\mathcal{B}_1}|y|^a\bigl(v_r^{x_0}-p_2^{x_0}\bigr)^2=\mathcal{M}(r,v^{x_0},p_2^{x_0})\longrightarrow0\qquad\textrm{as}\ r\downarrow0
\end{equation}
(not just along a subsequence!).
This immediately implies that the blow-up is unique, and since $v^{x_0}(x,0)=u(x)-\varphi(x)$,
we deduce that $u(x)-\varphi(x)=p_2^{x_0}(x-x_0)+o\bigl(|x-x_0|^2\bigr)$.
The fact that the rest $o\bigl(|x-x_0|^2\bigr)$ is uniform with respect to $x_0$ follows by a simple compactness argument,
see for instance \cite[Lemma 7.3 and Proposition 7.7]{Sbook}.

We now prove the continuous dependence of $p_2^{x_0}$ with respect to $x_0$.
Given $\varepsilon>0$ it follows by \eqref{Cc} that there exists $r_\varepsilon=r_\varepsilon(x_0)>0$ such that
\[\mathcal{M}(r_\varepsilon,v^{x_0},p_2^{x_0})<\varepsilon.\]
Now, by the continuous dependence of $v^{x_0}$ with respect to $x_0$, there exists $\delta_\varepsilon=\delta_\varepsilon(x_0)>0$ such that
\[\mathcal{M}(r_\varepsilon,v^{x_0'},p_2^{x_0})<2\varepsilon,\]
for all $x_0'\in \Gamma_2(u)$ satisfying $|x_0-x_0'|<\delta_\varepsilon$. Then, assuming without loss of generality that $r_\varepsilon \leq r_0$,
it follows by Proposition \ref{monneau} that
\[\mathcal{M}(r,v^{x_0'},p_2^{x_0})<2\varepsilon+C_M\,r_\varepsilon^{\gamma}\qquad\textrm{for all}\ r\in (0,r_\varepsilon].\]
Hence, letting $r\rightarrow0$ we obtain
$$
\int_{\partial\mathcal{B}_1}|y|^a\bigl(p_2^{x_0'}-p_2^{x_0}\bigr)^2=\mathcal{M}(0^+,v^{x_0'},p_2^{x_0})\leq 2\varepsilon+C_M\,r_\varepsilon^{\gamma},
$$
and by the arbitrariness of $\varepsilon$ and $r_\varepsilon$ we deduce the desired continuity of $p_2^{x_0}$ with respect to~$x_0$.
Finally, the uniform continuity of this map follows from the fact that $\Gamma_2(u)$ is a compact set (being a closed subset of $\Gamma(u)$,
see Remark \ref{rmk:split Gamma}).
\end{proof}

We can finally prove our main theorem, which is just a direct consequence of all our previous results.

\begin{proof}[Proof of Theorem \ref{thm1}]
As shown in Proposition \ref{phi-u2} the free boundary can be decomposed as
$\Gamma(u)=\Gamma_{1+s}(u)\cup \Gamma_2(u)$, where $\Gamma_{1+s}(u)$ consists of regular points where the blow-ups have homogeneity $1+s$,
while $\Gamma_2(u)$ consists of singular points where the blow-ups are homogeneous polynomials of degree $2$.
In addition, as observed in Remark \ref{rmk:split Gamma}, $\Gamma_{1+s}(u)$ is a open subset of $\Gamma(u)$.

As already mentioned before, it has been proved in  \cite[Theorem 7.7]{CSS} that $\Gamma_{1+s}(u)$ is a $(n-1)$-dimensional manifolds of class $C_{\rm loc}^{1,\alpha}$
(and in particular at such points the blow-up is unique, see \cite[Sections 6 and 7]{CSS}).

Concerning the regularity of $\Gamma_2(u)$, it follows by Proposition \ref{prop-singular-set}
that for any $x_0 \in \Gamma_2(u)$ the blow-up of $u-\varphi$ is a unique homogeneous polynomials of degree~$2$, denoted by $p_2^{x_0}(x)=\frac12 \langle A^{x_0}x,x\rangle$, which depends continuously with respect to the blow-up point.
We now stratify $\Gamma_2(u)$ according to the dimension of the kernel of $A^{x_0}$:
$$
\Gamma_2^k(u):=\{x_0 \in \Gamma_2(u)\,:\,{\rm dim}({\rm ker}\,A^{x_0})=k\},\qquad k=0,\ldots,n-1.
$$
Then the same argument as in the case of the obstacle problem for the classical Laplacian (see for instance \cite[Theorem 7.9]{Sbook} and \cite[Theorem 8]{C-obst2},
or \cite[Theorem 1.3.8]{GP})
shows that for any $x_0 \in \Gamma_2^k(u)$ there exists $r=r_{x_0}>0$ such that $\Gamma_2^k(u)\cap B_{r}(x_0)$
is contained in a connected $k$-dimensional $C^1$ manifold, which concludes the proof of Theorem \ref{thm1}.
\end{proof}

\section*{Appendix: The optimal stopping problem}

We provide in this Appendix a brief informal description of the optimal stopping problem and its relation to the obstacle problem \eqref{pb}.

\subsection{Stochastic processes}

Let $X_t$ be a stochastic process in $\R^n$ with
no memory, stationary increments, and satisfying $X_0=0$ a.s.
We notice that if we further assume that $t\mapsto X_t$ is a continuous path a.s. then
$X_t$ must be a \emph{Brownian motion} (possibly with a drift).
However, if we slightly relax this assumption and we only assume stochastic continuity,
then $X_t$ will be a \emph{L\'evy process}.

\subsection{Infinitesimal generators}

The infinitesimal generator of a L\'evy process $X_t$ is an operator $L:C^2(\R^n)\longrightarrow C(\R^n)$ defined by
\[Lu(x):=\lim_{t\downarrow0}\frac{\mathbb{E}\bigl[u(x+X_t)\bigr]-u(x)}{t}.\]
It is a classical fact that this definition leads to the formula
\[\mathbb{E}\bigl[u(x+X_t)\bigr]=u(x)+\mathbb{E}\left[\int_0^tLu(x+X_s)\,ds\right].\]
For a general L\'evy process one has
\[Lu(x)=\textrm{tr}(A\,D^2u)+b\cdot\nabla u+\int_{\R^n}\bigl\{u(x+y)-u(x)-y\cdot\nabla u(x)\chi_{B_1}(y)\bigr\}\,d\nu(y),\]
where $A$ is a non-negative definite matrix, $b\in \R^n$, and $\nu$ is the so-called L\'evy measure satisfying the L\'evy-Khintchine condition $\int_{\R^n}\min\bigl(1,|y|^2\bigr)\,d\nu(y)<\infty$.
We recall that when $X_t$ is the usual Brownian motion then $L=\Delta$,
while if $X_t$ is a stable radially symmetric L\'evy process then $L=-(-\Delta)^s$ for some $s\in(0,1]$.

\subsection{Optimal stopping}

We consider the following classical problem in control theory: given a stochastic process $X_t$, one can decide at each instant
of time whether to stop it or not.
The goal is to discover an optimal choice of the stopping time so that we minimize a cost or maximize a payoff.

More precisely, let $\Omega\subset\R^n$ be a bounded domain and consider the process $x+X_t$ with $x\in \Omega$.
Let $\tau=\tau_x$ be the first exit time from $\Omega$, i.e, the first time at which $x+X_\tau\in\R^n\setminus \Omega$.
Assume we are given a payoff function $\varphi:\R^n\longrightarrow \R$ and that, for any stopping time~$\theta$, we get a payoff
\[J_x[\theta]:=\mathbb{E}\bigl[\varphi(x+X_{\min\{\theta,\tau\}})\bigr].\]
In other words, if we stop at a time $\theta<\tau$ then $x+X_\theta\in \Omega$ and we get a payoff $\varphi(x+X_\theta)$, while
if we do not stop before exiting $\Omega$ (i.e., if $\theta\geq\tau$) then $x+X_\tau\in \R^n\setminus\Omega$ and we get a payoff $\varphi(x+X_\tau)$.
The problem is to find an optimal stopping strategy so that $J_x[\theta]$ is maximized.

For this, we define the value function
\[u(x):=\sup_{\theta} J_x[\theta],\]
and we look for an equation for $u(x)$.
Of course, the optimal $\theta$ will depend on $x\in\Omega$,
and once the function $u(x)$ is known then the optimal stopping time $\theta$ can be found by a dynamic programming argument.

\subsection{Optimality conditions}
Our goal here is to give a heuristic argument to show that the value function $u$
satisfies an obstacle problem with obstacle $\varphi$.
Since this is just a formal argument, we assume that $u$ is smooth.

First,
since in our strategy we can always decide to stop at $x$ and get the payoff $\varphi(x)$,
it follows by the definition of $u$ that
\[u\geq\varphi\qquad\textrm{in}\ \Omega.\]
Consider now a point $x\in\Omega$ such that $u(x)>\varphi(x)$ (that is,
we do \emph{not} stop the process at $x$).
This means that  (for most events $\omega$) we do not stop the process for some time $\delta>0$, and therefore at time $t=\delta$ the process will be at $x+X_\delta$.
But then, by definition of $u$, the best we can do is to get a payoff $u(x+X_\delta)$, and thanks to this fact one can actually show that
\[u(x)= \mathbb E\bigl[u(x+X_\delta)\bigr]+o(\delta)\qquad\textrm{if}\ u(x)>\varphi(x).\]
Recalling now that
\[\mathbb{E}\bigl[u(x+X_\delta)\bigr]=u(x)+\mathbb{E}\left[\int_0^\delta Lu(x+X_s)ds\right]\]
we get
\[\mathbb{E}\left[\int_0^\delta Lu(x+X_s)ds\right]=o(\delta)\qquad\textrm{if}\ u(x)>\varphi(x),\]
so that dividing by $\delta$ and letting $\delta\downarrow0$ we find
\[Lu(x)=0\qquad\textrm{if}\ u(x)>\varphi(x).\]

With similar arguments one can check that $-Lu\geq0$ in $\Omega$, hence $u$ solves the \emph{obstacle problem}
\[\min\bigl\{u-\varphi,\,-Lu\bigr\}=0\qquad\textrm{in}\ \Omega\]
with Dirichlet boundary conditions $u=\varphi$ in $\R^n\setminus\Omega.$

\subsection{Application to finance}
Among several areas where optimal stopping problems arise,
an important one is mathematical finance for American options pricing.

An American option gives an agent the possibility to buy a given asset at a fixed price at any time before the expiration date
(that could also be infinite, in which case the option is called perpetual).
The payoff of this option is a random variable that will depend on the value of this asset at the moment the option is exercised:
If $X_t$ is a stochastic process which represents the price of the assets,
the optimal choice of the moment to exercise the option corresponds to an optimal stopping problem for this process.
By the result of this paper it follows that, for perpetual options, when $X_t$ is a
stable radially symmetric L\'evy process then the exercise region enjoys some very nice geometric structure.

We refer to \cite[Chapter 6,D]{Evans} for a description of the model in the case of Brownian motion, and to the book \cite{CT} for an exhaustive discussion in the case of jump processes (see also \cite{LS,CF} for some regularity results in the case of finite expiration date).

%
%

\end{document}